\def\M{\mathcal M}
\def\LL{\mathcal L}
\def\tp{\mathrm{tp}}
\def\D{\mathrm{Def}}
\def\FC{\mathrm{FC}}
\theoremstyle{plain}
\newtheorem{theorem}{Theorem}[section]
\newtheorem{theoremA}{Theorem}
\newtheorem{proposition}[theorem]{Proposition}
\newtheorem{fact}[theorem]{Fact}
\newtheorem{lemma}[theorem]{Lemma}
\newtheorem{cor}[theorem]{Corollary}
\theoremstyle{definition}
\newtheorem{definition}[theorem]{Definition}
\newtheorem{remark}[theorem]{Remark}
\newtheorem{example}[theorem]{Example}
\newtheorem*{akn}{Aknowledgments}
\title{Probabilistically-like nilpotent groups}
\author{Daniel Palac\'in}
\address{Departamento de \'Algebra, Geometr\'ia y Topolog\'ia, 	Facultad de Ciencias Matem\'aticas, 
	Universidad Complutense de Madrid, Plaza Ciencias 3, 28040, Madrid, Spain}
\email{dpalacin@ucm.es}
\thanks{Research supported by MTM2017-86777-P as well as by the Deutsche
	Forschungsgemeinschaft (DFG, German Research Foundation) - 
	Project number 2100310301, part of the ANR-DFG 
	program GeoMod}
\subjclass[2010]{03C60, 20F18, 03C45, 20F24}
\begin{document}

\begin{abstract}
The main goal of the paper is to present a general model theoretic framework to understand  a result of Shalev on probabilistically finite nilpotent groups. We prove that a suitable group where the equation $[x_1,\ldots,x_k]=1$ holds on a wide set, in a model theoretic sense, is an extension of a nilpotent group of class less than $k$ by a uniformly locally finite group. In particular, this result applies to amenable groups, as well as to suitable model-theoretic families of definable groups such as groups in simple theories and groups with finitely satisfiable generics. 
\end{abstract}

\maketitle

\section*{Introduction}

Given a finite group $G$, the finite direct product $G^{n} = G\times \stackrel{(n)}{\ldots} \times G$ is endowed with the structure of a probability space by considering the normalized counting measure. The probability that $n$ random elements of $G$ satisfy a non-trivial word $w=w(x_1,\ldots,x_n)$ is then defined as
\[
\mathrm{Pr}_{G,w}(1) = \frac{\left|w_{G}^{-1}(1)\right|}{|G|^n},
\]
where $w_{G} : G ^n\to G$ is the map induced by the word $w$ by substitution.

For the commutator word $u(x,y) =[x,y]$, Gustafson observed in \cite{Gu73} that $\mathrm{Pr}_{G,u}(1)$ equals the number of conjugacy of classes of $G$ divided by the order of $G$ and that $\mathrm{Pr}_{G,u}(1)\le 5/8$ for non-abelian groups. He also proved that with a natural interpretation, the latter inequality holds for compact Hausdorff non-abelian groups, by considering the product measure of the normalized Haar measure (cf.\,\cite[Section 2]{Gu73}). 
A theorem of Neumann \cite{pN89} asserts that for every constant $\epsilon>0$ there is some positive integer $m=m(\epsilon)$ such that given a finite group $G$ with $\mathrm{Pr}_{G,u}(1)\ge \epsilon$, there are two normal subgroups $N\le H$ of $G$ such that $H/N$ is abelian and the orders of $G/H$ and $N$ are bounded above by $m$. A similar result was obtained by L\'evai and Pyber \cite{LePy} for profinite groups, by showing that if the  set  of  commuting  pairs  of  a  profinite  group
has positive Haar measure then the group is abelian-by-finite. Within the same spirit other results have been obtained for infinite groups, such as amenable groups \cite{AMV,Toi}.

Left normed commutator words $w_k=w_k(x_1,\ldots,x_k)$ of longer length are defined inductively on $k$ as $w_1=x_1$ and $w_{k+1}=[w_{k},x_{k+1}]$. In \cite{Sha}, Shalev studied finite groups of a given probability $\mathrm{Pr}_{G,w_k}(1)$. He proved the following: for any $k\ge 2$ and any $\epsilon>0$, there exists some $r=r(k,\epsilon)$ such that given a finite group $G$ with $\mathrm{Pr}_{G,w_k}(1)\ge \epsilon$, there is a characteristic nilpotent subgroup $N$ of $G$ of class less than $k$ such that the group $G/N$ has exponent less than $r$. This result extends to arbitrary residually finite groups, by taking the probability in their profinite completion (cf.\,\cite[Theorem 1.1 \& Corollary 1.4]{Sha} as well as \cite[Theorem 1.19]{MTVV21}). 

In the aforementioned results, the group $G^{n}$ is regarded as a probability space and it is assumed that the set $w_{G,n}^{-1}(1) := (w_{n})_{G}^{-1} (1)$ is large with respect to the measure. From a model theoretic point of view, these results resonate to similar results obtained by Poizat \cite{bP} and Wagner \cite{fW91} in stable groups (cf.\,\cite{JW00,JW19}), where the notion of  having positive probability is replaced by being generic. 
Within this spirit, and following Hrushovski's approach in \cite{Hr0}, we shall present a general model theoretic framework to treat these phenomena in a uniform way. 

We shall consider groups $G$ that come equipped with a family of ideals on the boolean algebra of definable sets  which satisfy a compatibility Fubini-like property. For instance, ultraproducts of finite groups with the ideal of null-sets of the normalized counting measure is an archetypical example. Sets which do not belong to the ideal are called {\em wide}. We study the algebraic structure of these groups under the assumption that the set $w_{G,n}^{-1}(1)$ is wide and prove that such groups are extensions of nilpotent groups by uniformly locally finite groups (Corollary \ref{C:Main}). For finite groups, this yields the following result of Shalev \cite[Theorem 1.1 \& Corollary 1.4]{Sha}, which corresponds to Theorem \ref{T:Finite} in the sequel.

\begin{theoremA}\label{T:IntroFinite}
Given $\epsilon>0$ and natural numbers $d,k\ge 1$, there is some $r=r(\epsilon,d,k)$ such that given a finite group $G$ with $\mathrm{Pr}_{G,w_k}(1)\ge \epsilon$, there is some characteristic subgroup $N$ of $G$ which is nilpotent of class less than $k$ and such that every $d$-generator subgroup of $G/N$ is of order at most $r$.
\end{theoremA}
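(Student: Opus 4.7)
The plan is a standard compactness reduction to Corollary~\ref{C:Main}. Suppose for contradiction that the theorem fails for some fixed $\epsilon > 0$ and $d, k \geq 1$: then for each $n \geq 1$ there exists a finite group $G_n$ with $\mathrm{Pr}_{G_n, w_k}(1) \geq \epsilon$ such that no characteristic subgroup $N \triangleleft G_n$ is simultaneously nilpotent of class less than $k$ and such that every $d$-generator subgroup of $G_n/N$ has order at most $n$.

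Fix a non-principal ultrafilter $\mathcal{U}$ on $\mathbb{N}$ and form the ultraproduct $G = \prod_{\mathcal{U}} G_n$. The normalized counting measures on the powers of each $G_n$ descend to a pseudofinite measure on the definable subsets of each $G^m$, and the ideal of null-sets is a Fubini-compatible family of ideals of exactly the kind the framework of the paper is designed to handle. By {\L}o\'s's theorem the set $w_{G,k}^{-1}(1)$ has measure at least $\epsilon$, hence is wide. Corollary~\ref{C:Main} then produces a definable normal subgroup $N \triangleleft G$, nilpotent of class less than $k$, such that $G/N$ is uniformly locally finite; let $r$ bound the order of every $d$-generator subgroup of $G/N$.

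Since $N$ is definable, a second application of {\L}o\'s's theorem returns, for $\mathcal{U}$-almost every $n$, a definable normal subgroup $N_n \triangleleft G_n$, nilpotent of class less than $k$, such that every $d$-generator subgroup of $G_n/N_n$ has order at most $r$. Any such $n \geq r$ already contradicts the choice of $G_n$, provided $N_n$ can be taken characteristic.

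The main obstacle is exactly this last point: Corollary~\ref{C:Main} yields a subgroup that is only normal, not overtly characteristic. The natural remedy is to exploit the canonicity of the model-theoretic construction, so that $N$ is $\Aut(G)$-invariant by uniqueness and this transfers via {\L}o\'s to $N_n$; alternatively, one may pass to a characteristic refinement of $N_n$, such as the intersection $\bigcap_{\sigma \in \Aut(G_n)} \sigma(N_n)$, which remains nilpotent of class less than $k$, or a suitable term of a canonical series of $G_n$, and then verify that the bound on $d$-generator subgroups of the quotient survives, up to a controlled inflation of $r$. Controlling the number of automorphic translates needed, so that the bound on $d$-generator quotients does not blow up with $|G_n|$, is the technical heart of this final step.
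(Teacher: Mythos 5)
Your overall strategy --- ultraproduct of a putative sequence of counterexamples, the pseudofinite counting measures as a Fubini-compatible system of ideals, an appeal to Corollary~\ref{C:Main}, and a transfer back via \L o\'s --- is exactly the route the paper takes in Theorem~\ref{T:Finite}. However, the point you single out as ``the technical heart'' and leave unresolved is a genuine gap in your write-up, and it is precisely the point where the paper's argument closes without any extra work. Corollary~\ref{C:Main} does not merely produce a normal subgroup: it produces an \emph{$\emptyset$-definable characteristic} subgroup $H$, and by the Remark following Theorem~\ref{T:M2} the defining formula can be taken parameter-free in the pure language of groups (it is $C_G(X/N)$ with $X=\{x : |G:C_G(x/N)|\le k\}$ and $N=\{1\}$). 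Consequently the trace $H_n$ of that formula in $G_n$ is invariant under \emph{every} automorphism of $G_n$, simply because automorphisms preserve parameter-free group-theoretic formulas; so $H_n$ is automatically characteristic for $\mathcal U$-almost all $n$ (where, by \L o\'s, it is also a normal subgroup nilpotent of class less than $k$). There is no need to intersect automorphic translates, no canonical series to invoke, and no inflation of $r$ to control.

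As written, your proof is incomplete: you end with a conditional (``provided $N_n$ can be taken characteristic'') and a list of candidate remedies, at least one of which (intersecting over $\Aut(G_n)$) would require a further nontrivial argument to keep the bound on $d$-generator subgroups of the quotient independent of $|G_n|$, since the number of automorphic translates is unbounded along the sequence. To repair the proof, replace that entire final paragraph with the observation above, and then conclude as the paper does: for infinitely many $n$ the choice of $G_n$ yields a $d$-generator subgroup of $G_n/H_n$ of order at least $n$, and the corresponding tuple in the ultraproduct generates an infinite $d$-generator subgroup of $G/H$, contradicting uniform local finiteness of $G/H$.
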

Our methods allow us to obtain new results concerning other families of groups. For example, a similar result is obtained for infinite amenable groups (Theorem \ref{T:Amen}), by choosing a suitable probability measure on the product space (cf.\,\cite[Theorem 1.14]{Toi} \& \cite[Theorem 1.6]{MTVV21}).
\begin{theoremA}\label{T:IntroAmen}
Let $G$ be an amenable group with a finitely additive right-invariant  probability measure $\mu$ such that 
\[
\int_{x_1\in G} \left( \dots \left( \int_{x_k\in G} \mathds{1}_{w_{G,k}^{-1}(1)} (x_1,\ldots,x_k) \,\mathrm{d} \mu \right) \ldots \right) \,\mathrm{d} \mu >0.
\]
We have that $G$ has a finite index subgroup which is FC-nilpotent of class at most $k-1$. Furthermore, there exists some characteristic subgroup $N$ of $G$ which is nilpotent of class less than $k$ and such that $G/N$ is uniformly locally finite.
\end{theoremA}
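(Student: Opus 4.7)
The plan is to translate the amenability hypothesis into the ideal-theoretic framework developed earlier in the paper, apply Corollary \ref{C:Main} for the second assertion, and argue the first assertion by an induction on $k$ using Fubini directly.

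For the translation, I would declare a subset $A\subseteq G^n$ to lie in the ideal $\I_n$ iff the iterated integral $\int \cdots \int \mathds{1}_A(x_1,\ldots,x_n)\, \mathrm{d}\mu\cdots \mathrm{d}\mu$ vanishes, so that a set is \emph{wide} precisely when its iterated $\mu$-integral is strictly positive. Right-invariance of $\mu$ gives translation invariance of each $\I_n$, and the iterated-integral definition encodes, by construction, the Fubini-like compatibility demanded by the framework. The hypothesis of the theorem says exactly that $w_{G,k}^{-1}(1)$ is wide, so Corollary \ref{C:Main} applies and yields the characteristic subgroup $N$ nilpotent of class less than $k$ with $G/N$ uniformly locally finite, which is the second assertion.

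For the finite-index FC-nilpotent subgroup, I would induct on $k$. In the base case $k=2$, Fubini applied to the hypothesis produces a positive-$\mu$ set of $x\in G$ with $\mu(C_G(x))>0$; but any subgroup of positive measure in a right-invariant finitely additive probability space has finite index, since distinct cosets are disjoint and of equal measure. Hence each such $x$ sits in the FC-center $\FC(G)$, so $\FC(G)$ itself has positive measure and therefore finite index, giving the $k=2$ conclusion. For the inductive step, Fubini extracts a wide subset $X\subseteq G$ of elements $x$ such that the set of $(x_2,\ldots,x_k)\in G^{k-1}$ satisfying $w_k(x,x_2,\ldots,x_k)=1$ is wide. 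Reading this equation as the vanishing of the length-$(k-1)$ commutator of the elements $[x,x_2],x_3,\ldots,x_k$, one applies the inductive hypothesis to the image of $G$ in $G/\FC(G)$ (after checking that the pushforward measure remains right-invariant and that the displayed integral stays positive) and lifts back by one central layer to obtain an FC-nilpotent finite index subgroup of class at most $k-1$.

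The main obstacle is the measure-theoretic bookkeeping. On the one hand, verifying that the iterated-integral ideal genuinely satisfies every axiom of the abstract framework — invariance under the family of actions used in Corollary \ref{C:Main}, behaviour of sections of non-Borel sets under a merely finitely additive measure, and the absence of pathological translates — must be done carefully, possibly by first passing to an ultrapower of $G$ where the measure becomes a Keisler-type ideal with better-behaved definable sections. On the other, the inductive step for FC-nilpotency requires that positivity of the iterated integral descends to the quotient $G/\FC(G)$; this is the genuinely new point beyond Corollary \ref{C:Main}, and the cleanest route is to verify that the right-invariant finitely additive measure on $G$ induces a right-invariant finitely additive probability measure on the quotient for which the pushed-forward word equation still carries positive iterated mass.
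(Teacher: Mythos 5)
Your treatment of the second assertion follows the paper's route: the paper also turns the iterated integral into a family of ideals, passes to a setting where Corollary \ref{C:Main} applies, and reads off the characteristic nilpotent subgroup. The point you flag as ``bookkeeping'' is, however, a genuine step and not automatic: the coordinatewise compatibility $(\dagger)$ requires producing an actual point $(a_1,\ldots,a_n)\in Y$ whose successive \emph{types} are wide, and in $G$ itself (which is not saturated) such a point need not exist. The paper resolves this exactly along the lines you gesture at: it expands the language to $\mathcal L^\mu$ so that every $\mu^{\rtimes n}$ becomes definable, passes to an $\aleph_1$-saturated elementary extension $\Gamma$, proves compatibility there (Corollary \ref{C:DefAmCompatible}), applies Corollary \ref{C:Main} to $\Gamma$, and pulls the $\emptyset$-definable subgroups back to $G$. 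So for the second assertion your proposal is essentially correct once that step is carried out.

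The first assertion is where there is a real gap. The paper does not need a separate argument: Corollary \ref{C:Main} (via Proposition \ref{P:M1}) already yields the finite-index FC-nilpotent subgroup, by the recursion of Lemma \ref{L:key}: one peels off the \emph{last} variable, showing $[a_1,\ldots,a_{n-i}]\in\FC_i(G)$ for successively larger $i$, working with the relative centralizers $C_G(x/\FC_i(G))$ inside the saturated model rather than with actual quotient groups. Your induction instead peels off the \emph{first} variable and then tries to read $w_k(x,x_2,\ldots,x_k)=1$ as a length-$(k-1)$ identity in the elements $[x,x_2],x_3,\ldots,x_k$ to which the inductive hypothesis in $G/\FC(G)$ would apply. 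This does not go through as stated: the first argument $[x,x_2]$ is not a free variable ranging over $G$ with the measure $\mu$ (nor over $G/\FC(G)$ with the pushforward measure), so the wideness of $w_{G,k}^{-1}(1)$ gives no control on the iterated integral of $w_{G/\FC(G),k-1}^{-1}(1)$, which is what the inductive hypothesis requires. The reduction that does work is the opposite one: Fubini in the last variable shows that for a wide set of $(x_1,\ldots,x_{k-1})$ the centralizer $C_G([x_1,\ldots,x_{k-1}])$ has positive measure, hence finite index, so $w_{k-1}$ lands in $\FC(G)$ on a wide set --- and even then one must either justify that the pushforward to $G/\FC(G)$ of a wide set is wide (delicate, since $\FC(G)$ is only an increasing union of definable sets) or, as the paper does, avoid quotients altogether and run the recursion through the ind-definable iterated FC-centers. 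Your base case $k=2$ is fine, but the inductive step needs to be reorganized along these lines.
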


In particular, for finitely generated groups, this result yields that $G$ is an extension of a nilpotent group of class $k-1$ by a finite group, since finitely generated FC-nilpotent groups of class $n$ are (nilpotent of class $n$)-by-finite \cite[Corollary 2.1]{DM56}.

 The model theoretic approach taken here also applies to other families of groups which appear naturally in model theory, such as groups definable in simple theories as well as groups with finitely satisfiable generics (fsg, in short). Examples of groups with fsg are stable groups and definably compact groups definable in a saturated o-minimal expansion of a real closed field (see \cite[Theorem 8.1]{HPP}), such as compact Lie groups. In particular,  we obtain the following new result as well as its corresponding version for compact Lie groups (Corollary \ref{C:Lie}):

\begin{theoremA}
	Let $k\ge 1$ and let $G$ be a definable group with fsg such that $w_{G,k}^{-1}(1)$ is generic in $G^k$ (that is, finitely many translates of it cover $G^k$). We have that $G$ has a definable normal subgroup of finite index which is nilpotent of class at most $k-1$.
\end{theoremA}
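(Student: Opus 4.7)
The plan is to deduce the theorem from Corollary~\ref{C:Main} by equipping $G$ with the ideal of non-generic definable sets in its Cartesian powers, and then to upgrade the resulting ``uniformly locally finite'' quotient to a genuinely finite one using features specific to the fsg setting.

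First I would work inside a sufficiently saturated model. In a definable fsg group there is a unique left-invariant Keisler measure $\mu$, and a definable subset of $G^n$ is generic if and only if it has positive measure with respect to the product measure $\mu^{\otimes n}$. Let $\I$ be the collection of non-generic definable sets in each Cartesian power of $G$. Uniqueness of $\mu$ together with the product/Fubini theorem for Keisler measures gives $\I$ the Fubini-like compatibility required by the paper's framework. Under this choice of ideal, genericity of $w_{G,k}^{-1}(1)$ in $G^k$ is exactly wideness in the sense of the paper, so Corollary~\ref{C:Main} applies and yields a characteristic definable normal subgroup $N$ of $G$, nilpotent of class less than $k$, such that $G/N$ is uniformly locally finite.

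It then remains to show that $G/N$ is in fact finite. The quotient $G/N$ inherits fsg, since generic types project to generic types and finite satisfiability of a partial type over $G$ descends to its image in $G/N$. The task therefore reduces to proving that a uniformly locally finite fsg group is finite. I would argue this by contradiction in the saturated model: uniform local finiteness in particular bounds the order of every $1$-generator subgroup, so $G/N$ has finite exponent; combined with the unique invariant Keisler measure on $G/N$, which must assign equal positive mass to each coset of any definable finite-index subgroup, an infinite uniformly locally finite fsg group would yield definable subgroups of unbounded finite index, and passing to the limit via compactness would produce a proper type-definable subgroup of bounded but infinite index, contradicting the tight structure of $G^{00}$ in fsg groups.

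The main obstacle is precisely this final ULF-to-finite upgrade. In the finite and amenable settings of Theorems~\ref{T:IntroFinite} and \ref{T:IntroAmen}, uniform local finiteness is genuinely weaker than finiteness and cannot be improved in general, so the argument must leverage properties that are specific to fsg---most importantly, the uniqueness of the invariant Keisler measure, or equivalently the rigidity of $G^{00}$. Making the saturation/compactness contradiction fully rigorous, and verifying that the type-definable subgroup it produces really does contradict the fsg hypothesis, is the technical heart of the proof.
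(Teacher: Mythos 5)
There is a genuine gap at the final step: the claim that a uniformly locally finite fsg group must be finite is false. An infinite elementary abelian $p$-group (in a saturated model) is stable, hence has fsg, has finite exponent $p$, and is uniformly locally finite since every $d$-generator subgroup has order at most $p^d$; yet it is infinite. So the reduction ``apply Corollary \ref{C:Main}, then upgrade the ULF quotient $G/N$ to a finite one'' cannot work in the generality you propose, and the heuristic contradiction you sketch does not materialize: fsg groups may perfectly well have proper type-definable subgroups of bounded infinite index (indeed $G^{00}$ itself is typically such), so producing one is not at odds with the fsg hypothesis. A secondary issue is your appeal to the \emph{unique} invariant Keisler measure and to the equivalence ``generic $\Leftrightarrow$ positive measure'' together with Fubini for the product measure: these are theorems about fsg groups \emph{in NIP theories}, which is not among the hypotheses; the paper explicitly sidesteps this by establishing only the weaker ``compatible system'' property for the non-generic ideal, via the types $p\rtimes q$ and $p\ltimes q$.

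The paper's actual route does not pass through Corollary \ref{C:Main} at all. It uses that an fsg group has a smallest type-definable subgroup $G^0$ of bounded index that is an intersection of definable normal finite-index subgroups, and runs the inductive commutator argument against the iterated centralizers $C_G^{i+1}(G^0)=C_G\bigl(G^0/C_G^i(G^0)\bigr)$ rather than against the iterated FC-centers. The key point is Lemma \ref{L:WideInd}: a wide ind-definable subgroup (here $C_G([a_1,\ldots,a_{k-(i+1)}]/C_G^i(G^0))$, which contains the generic element $a_{k-i}$) is definable of finite index and therefore contains $G^0$; this is exactly what converts ``FC-centralizer'' into ``genuine centralizer modulo $G^0$'' and yields that $G^0$ itself is nilpotent of class at most $k-1$, after which compactness produces the desired definable normal subgroup of finite index. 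If you want to salvage your approach, you would need to show that the specific subgroup $N$ coming from Theorem \ref{T:M2} contains $G^0$ (or argue with $G^0$ directly as the paper does), not that every ULF fsg group is finite.
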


\begin{akn}
 I wish to thank Amador Martin-Pizarro and Michael L\"osch for multiple discussions that helped to improve the presentation of the paper. I would also like to thank the anonymous referee for a very detailed report as well as for useful suggestions on improving the paper. In particular, on the definition of compatible system of ideals.
\end{akn}

\section{Some preliminaries on ideals}\label{s:Ideal}

 We work inside a structure $\M$ of a complete theory with infinite models in a countable language $\mathcal L$. We assume that the ambient model $\M$ is $\aleph_1$-saturated. By a definable set we mean a set definable with parameters from the universe of $\M$. Given a definable set $X$, we denote by $X(x)$ the first-order formula in $\mathcal L_x(\M)$ defining it, and vice versa. We write $\LL_X(\M)$ to denote the collection of formulas from $\LL_x(\M)$ that imply $X(x)$.
  
 We denote the Boolean algebra of definable subsets of a definable set $X$ by $\D_X(\M)$. An {\em ideal} $\mu$ of this boolean algebra is a non-empty collection a definable subsets of $X$ which does not contain $X$ and it is closed under subsets and finite unions. Sometimes we write $\mu_X$ to emphasize that it is an ideal in the Boolean algebra $\D_X(\M)$. Given an ideal $\mu$ of $\D_X(\M)$, we say that a type-definable set is {\em $\mu$-wide} if it it is not contained in a definable subset that belongs to $\mu$. We say that a partial type is $\mu$-wide if so is its type-definable set of realizations. By a standard compactness argument, wide partial types over a countable set of parameters can be extended to wide complete global types. Namely, given a wide partial type $\Psi(x)$ it suffices to see that the set 
\[ 
\Psi(x)\cup\{\neg \phi(x) : \phi(x)\in \LL_X(\M) \text{ non-wide} \}
\] 
is finitely consistent, since any completion of it yields a complete wide global type.

 We say that an ideal $\mu$ is {\em $A$-invariant} if it is invariant under type realizations, that is, if for every formula $\phi(x,y)$ in $\LL(A)$ and any two $|y|$-tuples $a$ and $b$ with $\tp(a/A)=\tp(b/A)$, the set $\phi(x,a)$ belongs to $\mu$ if and only if so does $\phi(x,b)$. We say that the ideal $\mu$ is {\em type-definable} (over a submodel $M$) if for every formula $\phi(x,y)\in\LL$ the set \[ \{b\in \M^{|y|} : \phi(x,b)\in \mu \} \] is type-definable over $M$. 
 
Before proceeding, let us recall some examples of ideals.

\begin{example}[Non-satisfiable sets]\label{E:FinSat}
Fix a subset $A\subset \M$, an $A$-definable set $X$ and consider the set of formulas $\phi(x)\in\mathcal L_X(\M)$ such that $\phi(\M)\cap A^{|x|}=\emptyset$. It is very easy to see that this is an $A$-invariant ideal.
\end{example}

 \begin{example}[Forking ideal]\label{E:Fork}
 
 A formula $\phi(x,a)$ {\em divides} over $A$ if there is an $A$-indiscernible sequence $(a_n)_{n\in\mathbb N}$ with $a_0=a$ such that the set $\{\phi(x,a_n)\}_{n\in\mathbb N}$ is inconsistent, and it {\em forks} over $A$ if it implies a finite disjunction of formulas such that each of them divides over $A$. Hence, it is straightforward to see that the set of formulas that fork over $A$ is an $A$-invariant ideal. Note that the forking ideal is proper whenever the formula $x=x$ does not fork.
\end{example}

\begin{example}[Keisler measure]\label{E:Meas}

A {\em Keisler measure} $\mu$ on $\mathcal M$ is a finitely additive probability measure on the boolean algebra $\D_X(A)$. Given such a measure, we consider the ideal of $\mu$-null sets. In general, this ideal need not be invariant nor type-definable. However, every finitely additive probability measure $\mu$ on all subsets of $X$ admits an expansion of the original language $\LL$ in which it becomes definable without parameters, see for example \cite[Section 2.6]{Hr1} (cf. Section 3.2). Namely, add a predicate $Q_{r,\varphi} (y)$ for each $r$ in $\mathbb Q \cap [0,1]$ and every formula  $\varphi(x,y)$ in $\LL$  
such that $Q_{r,\varphi}(b)$ holds if and only if $\mu(\varphi(X,b)) \le r$. These predicates $Q_{r,\varphi}$  give rise to new definable sets, which will also be measurable. Iterating this process countably many times and replacing the ambient model (if necessary), we obtain an expansion of the language $\LL$ such that the corresponding Keisler measure satisfies that the ideal of $\mu$-null sets becomes type-definable without parameters. In particular, a formula of positive measure does not fork over $\emptyset$,
 see \cite[Lemma 2.9 \& Example 2.12]{Hr1}. 
\end{example}

In the presence of a definable group, we should ask for some compatibility between the ideal and the group operation.

Let $G$ denote a definable group and let $\mu$ be an ideal on $\D_G(\M)$. We say that $\mu$ is {\em right translation-invariant} if the right translate of any set of $\mu$ also belongs to $\mu$. Likewise, we define the notion of left translation-invariant.
\begin{definition}
We say that $\mu$ has the right translation $\mathrm{S}1$ property if it is right translation-invariant and moreover, for every $A$-definable set $X$ there is no $A$-indiscernible sequence $(g_n)_{n\in\mathbb N}$ such that $Xg_n\cap Xg_m$ is in $\mu$ for $n<m$ but $Xg_n$ is $\mu$-wide for every (some) $n$.
\end{definition}

Note that if $G$ is an amenable group, that is a group equipped with a finitely additive right-invariant probability measure in $\mathcal P(G)$, then the ideal of subsets of measure $0$ has the right translation S$1$ property. In fact, the S$1$ property originates in the study of definable groups in pseudo-finite fields \cite{Hr0} and was extended in \cite{Hr1}, where it is defined with respect to automorphisms instead of translations (cf.\,\cite[Definition 2.8]{Hr1}). A similar argument as in \cite[Lemma 3.2]{Hr1} gives the following two results.

\begin{fact}\label{F:FinIndex}
Let $G$ be a definable group and let $\mu$ be an ideal on $\D_G(\M)$. Assume that $\mu$ is right translational $\mathrm{S}1$ and let $X$ be a definable subset. If $X$ is $\mu$-wide, then finitely many right translates of $X^{-1}X$ cover $G$. 
\end{fact}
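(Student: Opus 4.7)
The plan is to prove the contrapositive via the standard trick relating $\mu$-smallness of pairwise intersections to disjointness of translates. Fix $A$ such that $X$ is $A$-definable and assume, toward a contradiction, that no finite collection of right translates of $X^{-1}X$ covers $G$. The key elementary observation is that for $g,h\in G$, the intersection $Xg\cap Xh$ is nonempty if and only if $hg^{-1}\in X^{-1}X$; equivalently, $Xg\cap Xh=\emptyset$ precisely when $hg^{-1}\notin X^{-1}X$.

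Using this, I would recursively construct a sequence $(g_n)_{n\in\N}$ in $G$ as follows. Start with $g_0$ arbitrary, and having chosen $g_0,\dots,g_n$, use the assumption that $\bigcup_{i\le n} (X^{-1}X)g_i\neq G$ to pick $g_{n+1}$ outside this finite union; then $g_{n+1}g_i^{-1}\notin X^{-1}X$ for all $i\le n$, and hence $Xg_i\cap Xg_{n+1}=\emptyset$ for all $i\le n$. The resulting sequence satisfies $Xg_n\cap Xg_m=\emptyset$ for all $n<m$.

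Next, by the standard Ramsey plus compactness argument (using that $\M$ is $\aleph_1$-saturated), extract from $(g_n)_{n\in\N}$ an $A$-indiscernible sequence $(h_n)_{n\in\N}$ whose Ehrenfeucht--Mostowski type over $A$ is realized in the original sequence. Since the statement ``$Xh_n\cap Xh_m=\emptyset$'' is expressible by an $\LL(A)$-formula in the pair $(h_n,h_m)$, it is preserved: $Xh_n\cap Xh_m=\emptyset\in\mu$ for $n<m$. On the other hand, since $\mu$ is right translation-invariant and $X$ is $\mu$-wide, each right translate $Xh_n$ is $\mu$-wide as well.

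This directly contradicts the right translational $\mathrm{S}1$ property applied to $X$ with witness sequence $(h_n)_{n\in\N}$, completing the proof. The only mildly delicate step is the indiscernible extraction, but it is completely routine in an $\aleph_1$-saturated model; the real content of the lemma is the elementary translation-to-intersection dictionary that lets $\mathrm{S}1$ be applied to the putative non-covering sequence.
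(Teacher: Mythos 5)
Your proof is correct and is essentially the paper's argument read contrapositively: the paper takes a maximal family $Y$ with the translates $Xy$ pairwise disjoint, uses the same extraction of an $A$-indiscernible sequence to deduce from $\mathrm{S}1$ that $Y$ is finite, and then uses maximality together with the same dictionary ($Xg\cap Xy\neq\emptyset$ iff $g\in X^{-1}Xy$) to conclude $G\subseteq X^{-1}XY$. No substantive difference, and no gaps.
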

\begin{proof}
Let $Y$ be a maximal subset of $G$ satisfying the property that for any two distinct elements $y_1,y_2$ of $Y$ we have that $Xy_1\cap Xy_2=\emptyset$. Since any ideal with the right translation $\mathrm{S}1$ property is right translation-invariant, the assumption yields that $Y$ is finite, as otherwise using compactness we could find an indiscernible sequence (over the parameters of $X(x)$) witnessing this. Thus, given $g$ in $G$ we can find some element $y$ in $Y$ such that $Xg\cap Xy\neq \emptyset$ and so $g$ belongs to $X^{-1}Xy$. Hence, as $g$ was arbitrary, we conclude that $G$ is contained in $X^{-1}XY$.
\end{proof}

The following fact is \cite[Lemma 3.3]{Hr1}.

\begin{fact}
 Assume that $\M$ is $\kappa$-saturated for some $\kappa>2^{|\LL|+\aleph_0}$ and let $A$ be a countable set. Let $G$ be a definable group and let $\mu$ be an ideal on $\D_G(\M)$. Assume that $\mu$ is right translational $\mathrm{S}1$. Then, a type-definable over $A$ subgroup of $G$ is $\mu$-wide if and only if it has index at most $2^{|\LL|+\aleph_0}$.
\end{fact}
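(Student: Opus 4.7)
The plan is to verify both implications by combining saturation, compactness, and the right translation $\mathrm{S}1$ property.

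For the easy direction, I would start by assuming $[G:H] \le 2^{|\LL|+\aleph_0}$ and picking a system $\{g_\alpha\}_{\alpha < \lambda}$ of right coset representatives, with $\lambda \le 2^{|\LL|+\aleph_0}$. If $H$ were not $\mu$-wide, then some $A$-definable set $Y \supseteq H$ would lie in $\mu$, and the translates $\{Yg_\alpha\}$ would cover $G$. Since the parameter set $A\cup\{g_\alpha\}$ has cardinality strictly less than $\kappa$, I would invoke $\kappa$-saturation: if no finite subfamily covered $G$, the partial type $\{\neg Y(xg_\alpha^{-1}) : \alpha < \lambda\}$ would be finitely consistent and hence realized, contradicting the cover. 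Thus finitely many $Yg_{\alpha_i}$ already cover $G$, and right translation invariance of $\mu$ puts each of them in $\mu$, yielding $G \in \mu$ and contradicting the fact that $\mu$ is a proper ideal.

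For the necessity direction, assume $H$ is $\mu$-wide and, towards a contradiction, that $[G:H] > 2^{|\LL|+\aleph_0}$. The relation $E(x,y)\equiv xy^{-1}\in H$ is an $A$-type-definable equivalence relation with more than $2^{|\LL|+\aleph_0}$ classes. I would invoke the standard principle that such a type-definable equivalence relation in a sufficiently saturated model admits an $A$-indiscernible sequence $(g_n)_{n<\omega}$ whose terms lie in pairwise distinct classes, so that the cosets $Hg_n$ are all distinct. By indiscernibility, the type $p(z) = \tp(g_j g_i^{-1}/A)$ is independent of $i<j$, and $p\vdash z\notin H$. Since $H=\bigcap_X X$ as $X$ ranges over $A$-definable supersets of $H$, the partial type $p(z)\cup\{z\in X^{-1}X : X\supseteq H,\ X\ A\text{-definable}\}$ must be inconsistent, because any realizer would lie in $\bigcap_X X^{-1}X = H$. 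Compactness then produces an $A$-definable $X\supseteq H$ with $p\vdash z\notin X^{-1}X$, equivalently $Xg_i\cap Xg_j=\emptyset$ for all $i<j$. As $H\subseteq X$ is $\mu$-wide, so is $X$, and right translation invariance ensures every $Xg_n$ remains wide. The resulting $A$-indiscernible sequence $(g_n)$ with $X$ an $A$-definable set, all $Xg_n$ wide, and $Xg_n\cap Xg_m=\emptyset\in\mu$ for $n<m$, directly contradicts the $\mathrm{S}1$ property applied to $X$.

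The main obstacle is the extraction of the $A$-indiscernible sequence landing in pairwise distinct cosets: this is where the saturation hypothesis beyond $2^{|\LL|+\aleph_0}$ is used, together with the standard dichotomy for bounded versus unbounded type-definable equivalence relations over a countable parameter set. Once that sequence is in hand, the rest is a compactness argument shrinking the type-definable $H$ to a single definable approximation $X$, plus a direct appeal to $\mathrm{S}1$; this follows the strategy of \cite[Lemma~3.3]{Hr1} in the translation-invariant setting.
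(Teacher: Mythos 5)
The paper offers no proof of this fact---it is quoted verbatim from \cite[Lemma 3.3]{Hr1}---but your reconstruction is correct and follows the intended route: the covering-plus-saturation argument for the easy direction, and, for the converse, the standard bounded/unbounded dichotomy for $A$-type-definable equivalence relations to extract an $A$-indiscernible sequence in pairwise distinct right cosets, then compactness to replace the type-definable $H$ by a single $A$-definable $X\supseteq H$ with pairwise disjoint wide translates $Xg_n$, contradicting $\mathrm{S}1$. This is the same mechanism the paper itself uses in the proof of Fact \ref{F:FinIndex}, and the one nontrivial external ingredient you invoke (more than $2^{|\LL|+\aleph_0}$ classes forces an indiscernible sequence of pairwise inequivalent elements) is standard and correctly matched to the saturation hypothesis.
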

 Within the same spirit we prove a similar result for ind-definable groups. We remind here that by an \emph{ind-definable over $A$ subgroup} of a definable group we mean a subgroup whose domain has an $A$-type-definable complement. That is, an ind-definable over $A$ subgroup is a subgroup $\bigcup_{i\in I} X_i$ where each $X_i$ is $A$-definable.
 
\begin{lemma}\label{L:WideInd}
Let $G$ be a definable group and let $\mu$ be an ideal on $\D_G(\M)$. Suppose that $A$ is a countable set and assume that $\mu$ is right translational $\mathrm{S}1$. Then, an ind-definable over $A$  subgroup of $G$ contains an $A$-definable $\mu$-wide set if and only if it is $A$-definable and has finite index.
\end{lemma}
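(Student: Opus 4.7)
The backward direction is the easy one. Suppose $H$ is $A$-definable of finite index, with coset decomposition $G=Hg_1\sqcup\cdots\sqcup Hg_n$. Because $\mu$ is a proper ideal, $G$ is $\mu$-wide, so at least one coset $Hg_j$ must be $\mu$-wide (a finite union of sets in $\mu$ still lies in $\mu$). Right translation-invariance of $\mu$ (built into S$1$) then transfers widthness to $H=Hg_j\cdot g_j^{-1}$, so $H$ itself is the desired $A$-definable $\mu$-wide set.

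For the forward direction, write $H=\bigcup_{i\in I}X_i$ with each $X_i$ being $A$-definable, and suppose $H$ contains an $A$-definable $\mu$-wide set $Y$. First I would obtain finite index. Applying Fact~\ref{F:FinIndex} to $Y$, finitely many right translates of $Y^{-1}Y$ cover $G$. Since $Y\subseteq H$ and $H$ is a subgroup, $Y^{-1}Y\subseteq H$, whence finitely many right translates of $H$ cover $G$, i.e.\ $[G:H]<\infty$.

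The core of the argument is upgrading ind-definability to $A$-definability once finite index is known; this is the step I expect to be the most delicate. Choose coset representatives $1=g_1,\ldots,g_n$, so the complement $P:=G\setminus H=\bigcup_{j\ge 2}Hg_j=\bigcup_{j\ge 2}\bigcup_{i\in I}X_ig_j$ is ind-definable over $A\cup\{g_2,\ldots,g_n\}$. On the other hand, $P$ is type-definable over $A$ because $H$ is ind-definable over $A$. In the $\aleph_1$-saturated ambient model, any set that is simultaneously type-definable and ind-definable is definable: writing $P=\bigcap_k\phi_k=\bigcup_\ell\psi_\ell$, the open cover $G=\bigcup_k\neg\phi_k\cup\bigcup_\ell\psi_\ell$ admits a finite subcover by compactness, and intersecting with $P$ shows that $P$ equals a finite subunion of the $\psi_\ell$'s; hence $P$ (and thus $H$) is definable.

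It remains to see that $H$ is actually $A$-definable. Once $H$ is defined by some formula $\psi(x)$ (possibly over larger parameters), the implication $\psi(x)\to\bigvee_{i\in I}X_i(x)$ holds, so by compactness $\psi(x)\to X_{i_1}(x)\vee\cdots\vee X_{i_m}(x)$ for some finite subfamily; conversely each $X_{i_k}\subseteq H$. Therefore $H=X_{i_1}\cup\cdots\cup X_{i_m}$ is a finite union of $A$-definable sets, hence $A$-definable, completing the proof.
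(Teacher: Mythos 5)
Your proof is correct and follows essentially the same route as the paper: Fact~\ref{F:FinIndex} applied to the wide set gives finite index, and then the observation that $H$ is simultaneously type-definable and ind-definable (via the finitely many cosets) yields definability by compactness in the $\aleph_1$-saturated model. You merely spell out in detail the ``standard compactness argument'' that the paper leaves implicit, including the final reduction of $H$ to a finite union of the $A$-definable pieces $X_i$.
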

\begin{proof}
 Suppose that an ind-definable over $A$ subgroup $H$ of $G$ contains a definable set $X$ which is $\mu$-wide. By Fact \ref{F:FinIndex}, finitely many right translates of the subset $X^{-1}X$ of $H$ cover $G$ and so $H$ has finite index in $G$. As a translate of $H$ is also ind-definable over $A$ and a representative of the coset, we then have that $H$ is type-definable and hence $A$-definable, by a standard compactness argument, using that the ambient model $\M$ is $\aleph_1$-saturated.
 
 The other direction is clear: a definable subgroup of finite index is $\mu$-wide, since $\mu$ is right translation-invariant.
\end{proof}

We recall a Fubini-like property for ideals introduced in \cite[Lemma 2.17]{Hr1}.

Identify $m$ with the set $\{0,1,\ldots,m-1\}$. Given definable sets $X_0,\ldots,X_{m-1}$ and a subset $s$ of $m$, set $X_s$ to be the Cartesian product $\prod_{i\in s} X_i$, ordered increasingly. Assume that the set $X_s$ is defined by the formula $X_s(x_s)$ with variables $x_s=(x_i)_{i\in s}$. 
\begin{definition}\label{D:Fubini}
For each $s\in\mathcal P(m)\setminus\{\emptyset\}$, let $\mu_{s}$ be an ideal on $\D_{X_s}(\M)$ and let $A$ be a small subset. We say that the family $\{\mu_{s}\}_{s\in\mathcal P(m)\setminus\{\emptyset\}}$ of ideals satisfies Fubini if the following properties are satisfied:
\begin{enumerate}[$(i)$]
 \item For any non-empty disjoint $s,t$ in $\mathcal P(m)$ we have that if $\phi(x_s)\wedge \psi(x_t)$ is in $
 \mu_{s\cup t}$, then either $\phi(x_s)$ is in $\mu_s$ or $\psi(x_t)$ is in $\mu_{t}$. 
 \item For any non-empty disjoint $s,t\in \mathcal P(m)$ and any $\LL(A)$-formula $\phi(x_s,x_t)$ we have that if $\phi(x_s,a_t)\in \mu_s$ whenever $\tp(a_t/A)$ is $\mu_t$-wide, then $\phi(x_s,x_t)\in \mu_{s\cup t}$.
 \item For any non-empty disjoint $s,t\in \mathcal P(m)$ and any $\LL(A)$-formula $\phi(x_s,x_t)$ we have that if $\phi(a_s,x_t)\in \mu_t$ whenever $\tp(a_s/A)$ is $\mu_s$-wide, then $\phi(x_s,x_t)\in \mu_{s\cup t}$.
\end{enumerate}
Note that the points $(ii)$ and $(iii)$ are equivalent, as one may reorder the variables of a formula. When $X_s=X_t$ and  $\mu_s=\mu_{t}$ for every $s,t\in \mathcal P(m)$ with $|s|=|t|$, we simply say that $\{\mu_n\}_{1\le n<m}$ satisfies the Fubini property.
\end{definition}
The definition is an abstraction of the situation in finitely additive probability measurable spaces where one considers the product measure. However, note that, as in Hrushovski's orginal definition, in $(i)$ it is not asked that $\phi(x_s)\wedge \psi(x_t)$ is in $
\mu_{s\cup t}$ if and only if $\phi(x_s)$ is in $\mu_s$ or $\psi(x_t)$ is in $\mu_{t}$.

Due to the nature of the left normed commutator word $w_k(x_1,\ldots,x_k)$ we can weaken the above property for our purposes. Instead of assuming that $G$ has a family of ideals satisfying Fubini, it would be sufficient to suppose that the  ideals have certain compatibility as in $(ii)$ and $(iii)$. 


\begin{definition}
Let $X_1,\ldots,X_{n}$ be definable sets and set $X= X_1\times \ldots \times X_{n}$. For each $1\le i<n$, suppose that $\mu_i$ is an ideal on $\D_{X_{i}}(\M)$ and let $\mu$ be an ideal on $\D_X(\M)$. We say that $\mu$ is {\em coordinatewise $(\mu_1,\ldots,\mu_{n})$-compatible} if the following holds:
\begin{enumerate}[$(\dagger)$]
	\item For any parameter set $A$ and any $A$-definable set $Y\in \D_X(\M)$ which is $\mu$-wide, there exist some $(a_1,\ldots,a_{n})\in Y$ such that the type $\tp(a_1/A)$ is $\mu_1$-wide and also the types $\tp(a_{i+i}/A,a_1,\ldots,a_{i})$ for $1\le i<n$ are $\mu_{i+1}$-wide.
\end{enumerate}
\end{definition}
\noindent Note that if $\nu\supset \mu$ is an ideal and $\mu$ is coordinatewise  $(\mu_1,\ldots,\mu_{n})$-compatible, then so is $\nu$. Also, if $\mu$ is coordinatewise $(\mu_1,\ldots,\mu_{n})$-compatible and $Y$ is a $\mu$-wide definable subset of $X$, then the projection of $Y$ onto the $i$-th coordinate is $\mu_i$-wide. Nonetheless, there is no information on pairs of coordinates. This phenomena is covered in the following notion:

\begin{definition}\label{D:CSI}
Let $X$ be a definable set and let $m\ge 1$. For each $1\le n<m$ let $\mu_n$ be an ideal of definable subsets of $X^n = X\times \stackrel{n}{\ldots} \times X$. We say that the family $\{\mu_{n}\}_{1\le n<m}$ of ideals of $X^m$ is a {\em compatible $m$-system of ideals} if $\mu_n$ is coordinatewise $(\mu_i,\mu_{n-i})$-compatible for every $1\le i<n$.
\end{definition}

Note that in the definition of compatible system we are assuming that any Cartesian product of the set $X$ of length $k$ is equipped with the ideal $\mu_k$. A compatible $2$-system of ideals is the same as a coordinatewise system (of length $2$). Nonetheless, in general these notions may differ. Their relation is collected in the following statement.

\begin{lemma}\label{L:Relation} Let $m\ge 1$ be a natural number. For $1\le n<m$, let $\mu_n$ be an ideal on definable sets of $X^n$. 
	\begin{enumerate}
		\item  If $\{\mu_n\}_{1\le n<m}$ satisfies the Fubini property, then $\{\mu_n\}_{1\le n<m}$ is a compatible $m$-system of ideals. 
		\item If $\{\mu_n\}_{1\le n<m}$ is a compatible $m$-system of ideals, then each $\mu_n$ is coordinatewise $(\mu_1,\ldots,\mu_1)$-compatible. 
	\end{enumerate}
\end{lemma}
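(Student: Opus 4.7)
The plan is to derive both parts from two standard moves: the contrapositives of the Fubini axioms, and the extension of a wide partial type to a complete wide type over a countable parameter set, as noted in Section~\ref{s:Ideal} just after the definition of $\mu$-widening.

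For part (1), I would fix $1\le i<n$, a parameter set $A$, and an $A$-definable $\mu_n$-wide set $Y\subseteq X^n$. Writing $s=\{1,\ldots,i\}$ and $t=\{i+1,\ldots,n\}$, the contrapositive of Fubini axiom $(iii)$ applied to $Y(x_s,x_t)\notin\mu_n$ yields a tuple $a_s=(a_1,\ldots,a_i)$ such that $\tp(a_s/A)$ is $\mu_i$-wide and the partial type $\{Y(a_s,x_t)\}$ is $\mu_{n-i}$-wide over $A\cup\{a_s\}$. Extending this partial type to a complete $\mu_{n-i}$-wide type over $A\cup\{a_s\}$ and letting $a_t=(a_{i+1},\ldots,a_n)$ realize it, the tuple $(a_1,\ldots,a_n)\in Y$ witnesses coordinatewise $(\mu_i,\mu_{n-i})$-compatibility.

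For part (2), I would proceed by induction on $n$. The base case $n=1$ is the extension of a wide partial type $Y(x_1)$ to a complete $\mu_1$-wide type over $A$. For the inductive step, the compatible $m$-system hypothesis gives that $\mu_n$ is coordinatewise $(\mu_1,\mu_{n-1})$-compatible; hence any $A$-definable $\mu_n$-wide set $Y\subseteq X^n$ contains a point $(a_1,b)$ with $\tp(a_1/A)$ being $\mu_1$-wide and $\tp(b/A,a_1)$ being $\mu_{n-1}$-wide. Consider the fiber $Y_{a_1}:=\{c\in X^{n-1}:(a_1,c)\in Y\}$, which is definable over $A\cup\{a_1\}$. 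Since $Y_{a_1}$ contains the type-definable set of realizations of the $\mu_{n-1}$-wide type $\tp(b/A,a_1)$, it is itself $\mu_{n-1}$-wide. The inductive hypothesis applied to $Y_{a_1}$ over $A\cup\{a_1\}$ produces $(c_1,\ldots,c_{n-1})\in Y_{a_1}$ such that $\tp(c_{j+1}/A,a_1,c_1,\ldots,c_j)$ is $\mu_1$-wide for $0\le j<n-1$; then $(a_1,c_1,\ldots,c_{n-1})\in Y$ is the required tuple.

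There is no substantial obstacle beyond bookkeeping; the only delicate step is the claim that the fiber $Y_{a_1}$ is $\mu_{n-1}$-wide, which is immediate because a definable set containing the realization set of a $\mu_{n-1}$-wide partial type cannot itself lie in $\mu_{n-1}$ (as ideals are downward closed under inclusion of definable sets).
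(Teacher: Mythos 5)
Your proof is correct and follows essentially the same route as the paper's: part (1) via the contrapositive of Fubini axiom $(iii)$ applied to the partition $\{1,\ldots,i\}$, $\{i+1,\ldots,n\\}$, and part (2) by induction on $n$, passing to the fiber $Y_{a_1}$ and noting it is $\mu_{n-1}$-wide because it is definable over $A\cup\{a_1\}$ and contains a realization of a wide type. The only difference is that you make explicit the step of extending a wide partial type to a complete wide type before realizing it, which the paper leaves implicit.
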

\begin{proof}
(1) Fix some $1\le i<n$ and let $\phi(x_i,x_{n-i})$ be an $\LL(A)$-formula defining a $\mu_n$-wide subset $Y$ of $X^n$. By $(iii)$ of Fubini property, there must be some $a\in X^i$ such that $\tp(a/A)$ is $\mu_i$-wide but $\phi(a,x_{n-i})$ is not in $\mu_{n-i}$. Thus, there is some $b\in X^{n-i}$ such that $(a,b)\in Y$ holds $\tp(b/A,a)$ is $\mu_{n-i}$-wide. This shows that $\mu_n$ is coordinatewise $(\mu_i,\mu_{n-i})$-compatible. 

(2) The proof is by induction on $n$, with the case $n=1$ being trivial. Suppose $n>1$ and let $Y$ be an $A$-definable subset of $X^n$ which we assume to be $\mu_n$-wide. Thus, there is some $a_1\in X$ and $b\in X^{n-1}$ such that $(a_1,b) \in Y$ with $\tp(a_1/A)$ $\mu_1$-wide and $\tp(b/A,a_1)$ $\mu_{n-1}$-wide. As the set $Y_{a_1}=\{ y\in X^{n-1} \ : \ (a_1,y)\in Y\}$ is $\mu_{n-1}$-wide since it is definable over $A\cup\{a_1\}$ and $b\in Y_{a_1}$, by induction on $n$ we get some $(a_2,\ldots,a_n)\in Y_{a_1}$ such that $\tp(a_{i+1}/A,a_1,\ldots,a_{i})$ is $\mu_1$-wide for $1\le i<n$. This shows that the ideal $\mu_n$ is  coordinatewise $(\mu_1,\ldots,\mu_1)$-compatible. 
\end{proof}

Note that while Fubini is a symmetric property, compatibility is not. This is clear in the proof of (1) where we have only used condition (iii) of Fubini. In a way, Fubini is a commutativity-like property whereas compatibility is a non-symmetrical associativity-like property.

In Section \ref{s:Applications} we will provide examples of groups admitting a natural compatible system of ideals. In fact, note that Fubini's theorem for finite sums yields that a non-principal ultraproduct $G$ of finite groups, which is an $\aleph_1$-saturated group, admits a family of ideals $\{\mu_n\}_{n\in \mathbb N}$ satisfying Fubini, where each $\mu_n$ denotes the ideal of definable null sets with respect to the non-standard normalized counting measure on $G^{n}$ (cf. Section \ref{s:FiniteGroups} \& \cite[Theorem 19]{BT14}).

\section{Largely nilpotent groups}\label{s:Groups}
As in the previous section, we work inside an $\aleph_1$-saturated structure $\M$.

Given a group $G$ and a subset $X$ of $G$, we denote by $C_G(X)$ the set of elements of $G$ that commute with all elements of $X$. When $X$ is a definable set, the subgroup $C_G(X)$ is definable over the same parameters. Given a normal subgroup $N$ of $G$, we define $C_G(X/N)$ to be the subgroup 
\[
C_G(X/N) = \bigcap_{x\in X} C_G(x/N),
\]
where we write $C_G(x/N)$ to denote the preimage of $C_{G/N}(xN)$ in $G$ under the quotient projection. 

\begin{remark}\label{R:Ind-def}
	If $N$ is ind-definable and $X$ is type-definable, then $C_G(X/N)$ is ind-definable over the same parameters: indeed, an element $u$ does not belong to $C_G(X/N)$ if and only if it satisfies the partial type on $u$ given by
	\[
	\exists x y \left(X(x) \wedge (\neg N)(y) \wedge y = [u,x] \right).
	\]
\end{remark}

\begin{definition}
Let $G$ be a group and let $H,N$ be subgroups of $G$ with $N$ normal in $G$. The {\em FC-centralizer of $H$ modulo $N$ in $G$} is defined as
$$
\FC_G(H/N)=\{x\in G: |H : C_H(x/N)| \mbox{ is finite}\}.
$$
If $N$ is trivial it is omitted.
\end{definition}
\begin{remark}\label{R:FC-ind-def}
The FC-centralizer $\FC_G(H/N)$ is a subgroup, as $C_H(g/N)\cap C_H(h/N)$ is contained in $C_H(gh^{-1}/N)$ for any $g,h\in G$. Furthermore, if $N$ is an $\emptyset$-ind-definable normal subgroup of $G$, then the subgroup $\mathrm{FC}_G(G/N)$ is $\emptyset$-ind-definable as well: indeed, the complement of  $\mathrm{FC}_G(G/N)$ is type-defined by the partial type on $x$ given by:
\[
\bigwedge_{k\in\mathbb N}\exists (y_i)_{i\le k} \,\exists (u_{i,j})_{i<j\le k} \bigwedge_{i<j}\left([x,y_i^{-1}y_j]= u_{i,j} \wedge (\neg N)(u_{i,j}) \right),
\]
which by compactness expresses that there are infinitely many elements $(y_i)_{i\in\mathbb N}$ of $G$ such that $y_i^{-1}y_j$ does not belong to $C_G(x/N)$ for every $i<j$.
\end{remark}
The {\em FC-center} $\mathrm{FC}(G)$ of a group $G$ is simply $\FC_G(G)$. The terminology FC stands for finite conjugates, as in fact we have 
\[
\mathrm{FC}(G) = \{ g\in G \ : \ g^G \text{ is finite}\}.
\] This group might not be definable, since the finite indexes of the centralizers can be arbitrarily large, but it is clearly ind-definable in the pure language of groups: indeed, we have that
$$
\FC(G) = \bigcup_{n\in\mathbb N} \left\{ g\in G : |G:C_G(g)| \le n \right\}.
$$
When the increasing chain of definable subsets of the right hand side stabilizes, the FC-center is not far from being abelian. Neumann \cite[Theorem 3.1]{bN54} proved the following.

\begin{fact}\label{F:FC}
If there exists some $k$ such that $ \mathrm{FC}(G) = \{g\in G : |G:C_G(g)|\le k \}$, then $\mathrm{FC}(G)$ is finite-by-abelian.
\end{fact}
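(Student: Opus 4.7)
The plan is to prove that the commutator subgroup $[H,H]$ of $H := \FC(G)$ is finite; this suffices, since then $H/[H,H]$ is abelian and $H$ is finite-by-abelian.

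First, I would reduce to a BFC-condition on $H$ itself: by hypothesis every $h \in H$ satisfies $|G:C_G(h)| \le k$, and intersecting with $H$ gives $|H : C_H(h)| = |H : H \cap C_G(h)| \le k$, so every conjugacy class $h^H$ has cardinality at most $k$ and $H$ is a BFC-group with bound $k$.

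Next, I would show that $Z(H)$ has finite index in $H$. For each $h \in H$ the normal core of $C_H(h)$ in $H$ equals $C_H(h^H) = \bigcap_{h' \in h^H} C_H(h')$, an intersection of at most $k$ subgroups of index at most $k$, hence a normal subgroup of index at most $k^k$. Therefore
\[
Z(H) \;=\; \bigcap_{h \in H} C_H(h^H)
\]
is the intersection of a family of normal subgroups of $H$ of uniformly bounded index. The crucial point is to extract from this family a finite subcollection whose intersection is already $Z(H)$; this is the combinatorial heart of Neumann's argument and can be achieved by a covering-type lemma combined with Dietzmann's lemma, exploiting that there are only finitely many isomorphism types of groups of order at most $k^k$. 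One concludes that $[H:Z(H)]$ is finite, bounded by a function of $k$.

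Finally, I would invoke Schur's classical theorem: once $[H:Z(H)]$ is finite, $[H,H]$ is finite, of order bounded in terms of $[H:Z(H)]$ and hence of $k$. Thus $\FC(G)$ is finite-by-abelian.

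The main obstacle is the finiteness of $[H:Z(H)]$ in the second step: a naive compactness argument fails, since an infinite group can carry infinitely many subgroups of a given finite index, and so one cannot automatically reduce the intersection defining $Z(H)$ to finitely many terms. The BFC hypothesis must be used in an essential way, and this is precisely the content of Neumann's original covering argument.
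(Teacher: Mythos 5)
Your opening reduction is correct and is exactly how the paper handles this statement (it simply cites \cite[Theorem 3.1]{bN54} and gives no proof): writing $H=\mathrm{FC}(G)$, the hypothesis gives $|H:C_H(h)|=|H:H\cap C_G(h)|\le |G:C_G(h)|\le k$ for every $h\in H$, so $H$ is a BFC-group with bound $k$, and it suffices to prove that $[H,H]$ is finite. Up to that point there is nothing to object to.

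The route you then propose, however, breaks at a step that is false rather than merely difficult: a BFC-group need not have centre of finite index. The infinite extraspecial $p$-group $E$, generated by elements $x_i,y_i$ ($i\in\mathbb N$) of order $p$ with $[x_i,y_i]=z$ central of order $p$ and all other commutators of generators trivial, has every conjugacy class of size at most $p$ (the centralizer of a non-central element is the preimage of a hyperplane for the induced alternating form on $E/\langle z\rangle$), yet $|E:Z(E)|$ is infinite while $E'=\langle z\rangle$ has order $p$. So although $Z(H)=\bigcap_{h\in H}C_H(h^H)$ is indeed an intersection of normal subgroups of index at most $k^k$, it is in general \emph{not} the intersection of any finite subfamily --- a finite intersection of finite-index subgroups has finite index --- and no covering or Dietzmann-type lemma can deliver the bound $|H:Z(H)|\le f(k)$ you want. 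Hence Schur's theorem cannot be invoked, and the sentence ``this is precisely the content of Neumann's original covering argument'' is not accurate: Neumann's proof of the BFC theorem bounds the order of the derived subgroup directly (using his covering lemma together with the fact that, for each $h$, the set of commutators $\{[h,x]:x\in H\}$ is a normal subset of at most $k$ elements, to which Dietzmann's lemma applies), without ever bounding the index of the centre. To complete the argument you should either cite the BFC theorem as the paper does, or reproduce one of its actual proofs (see \cite{bN54} or \cite{Rob}, \S 14.5); the detour through $Z(H)$ cannot be repaired.
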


\begin{lemma}\label{L:key}
Let $G$ be a definable group and let $n\ge 1$. Suppose that $G$ has an ideal $\mu$ with the right translational $\mathrm{S}1$ property and let $N$ be an $\emptyset$-ind-definable normal subgroup of $G$. For any two elements $a,b \in G$ such that $\tp(a/b)$ is $\mu$-wide and $[b,a]\in N$, we have that $b\in \mathrm{FC}_G(G/N)$. Furthermore, if $\tp(b)$ is $\mu$-wide, then $G/N$ is finite-by-abelian-by-finite.
\end{lemma}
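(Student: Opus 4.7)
The plan is to obtain both conclusions from Lemma \ref{L:WideInd}, with Fact \ref{F:FC} handling the last step of the second assertion.

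For the first claim, I would use that since $N$ is $\emptyset$-ind-definable, writing $N = \bigcup_i Y_i$ with each $Y_i$ $\emptyset$-definable yields
\[
C_G(b/N) = \{x \in G : [b,x] \in N\} = \bigcup_i \bigl\{x \in G : [b,x] \in Y_i\bigr\},
\]
so $C_G(b/N)$ is $\{b\}$-ind-definable. The hypothesis $[b,a] \in N$ places $a$ in one of the $\{b\}$-definable pieces $Z = \{x : [b,x] \in Y_i\}$, and since $\tp(a/b)$ is $\mu$-wide every formula in it defines a $\mu$-wide set, so $Z$ is $\mu$-wide. Lemma \ref{L:WideInd} applied with $A = \{b\}$ then yields that $C_G(b/N)$ is $\{b\}$-definable of finite index in $G$, i.e., $b \in \FC_G(G/N)$.

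Now assume $\tp(b)$ is $\mu$-wide. By Remark \ref{R:FC-ind-def} the subgroup $\FC_G(G/N)$ is $\emptyset$-ind-definable, so some $\emptyset$-definable piece containing $b$ lies in $\tp(b)$ and is hence $\mu$-wide; a second application of Lemma \ref{L:WideInd} with $A = \emptyset$ shows that $\FC_G(G/N)$ is itself $\emptyset$-definable of finite index in $G$. To invoke Fact \ref{F:FC} for $G/N$, I need a uniform bound on $|G : C_G(x/N)|$ as $x$ ranges over $\FC_G(G/N)$. Setting
\[
B_k = \bigl\{x \in G : |G : C_G(x/N)| \le k\bigr\},
\]
the complement $B_k^c$ is given by $\exists y_0, \ldots, y_k\, \bigwedge_{i<j} [x, y_i^{-1}y_j] \in \neg N$. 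Since $\neg N$ is type-definable and $\M$ is $\aleph_1$-saturated, the standard finite-consistency argument shows that such an existential quantification of a type-definable condition is again type-definable, so each $B_k$ is ind-definable. The $B_k$ form an increasing chain with union $\FC_G(G/N)$; since the latter is now $\emptyset$-definable, a compactness argument in $\M$ produces some $K$ with $\FC_G(G/N) = B_K$. Applying Fact \ref{F:FC} in $G/N$ then shows $\FC(G/N)$ is finite-by-abelian, and since it has finite index in $G/N$ we conclude that $G/N$ is finite-by-abelian-by-finite.

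The main subtlety I expect is the compactness step extracting the uniform bound $K$: it requires confirming that $B_k^{c}$ is type-definable even though $N$ is only ind-definable, which is precisely why the existential quantifier over the type-definable condition needs to be handled via finite satisfiability in the saturated model. Everything else is a direct application of the ind-definability machinery from Section \ref{s:Ideal}.
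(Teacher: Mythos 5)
Your proof is correct and follows essentially the same route as the paper's: identify $C_G(b/N)$ as ind-definable over $b$ (Remark \ref{R:Ind-def}) and apply Lemma \ref{L:WideInd} to get finite index, then use $\emptyset$-ind-definability of $\FC_G(G/N)$ (Remark \ref{R:FC-ind-def}), Lemma \ref{L:WideInd} again, a compactness argument to extract the uniform bound $K$, and Fact \ref{F:FC}. The only difference is that you spell out in more detail the point that the complement of $B_k$ remains type-definable (projection of a type-definable set in an $\aleph_1$-saturated model), which the paper handles implicitly via the displayed partial type in Remark \ref{R:FC-ind-def}.
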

\noindent In particular, given a non-trivial word  $w=w(x_1,\ldots, x_{n})$ and elements $a_1,\ldots,a_{n+1}$ of $G$ such that $\tp(a_{n+1}/a_1,\ldots,a_{n})$ is $\mu$-wide and $[w(a_1,\ldots,a_{n}),a_{n+1}]\in N$, we have that 
$w(a_1,\dots,a_{n}) \in \mathrm{FC}_G(G/N).
$
\begin{proof}
Observe first that for any element $g$ of $G$, the subgroup $C_G(g/N)$ is ind-definable over $g$ by Remark \ref{R:Ind-def}. Assume that $a$ and $b$ are elements of $G$ such that $\tp(a/b)$ is $\mu$-wide and that $[b,a]$ belongs to $N$. As $a$ belongs to the subgroup $C_G(b/N)$, which is ind-definable over $b$ by Remark \ref{R:Ind-def}, we have that $C_G(b/N)$ contains a $\mu$-wide subset and so it has finite index in $G$ by Lemma \ref{L:WideInd}. Consequently, the element $b$ belongs to $\mathrm{FC}_G(G/N)$, as desired.

Suppose in addition that $b$ has a $\mu$-wide type. Since $\mathrm{FC}_G(G/N)$ is $\emptyset$-ind-definable by Remark \ref{R:FC-ind-def}, it contains a $\mu$-wide set. Thus, it is definable and has finite index in $G$ by Lemma \ref{L:WideInd}. Hence, by compactness and Remark \ref{R:FC-ind-def}, there is some natural number $k$ such that for every element $x$ in $G$, we have that $x$ belongs to $\mathrm{FC}_G(G/N)$ if and only if $|G:C_G(x/N)|\le k$. In particular, we have that
\[
\mathrm{FC}(G/N) = \{ xN\in G/N : |G/N : C_{G/N}(xN)|\le k\}.
\]Therefore, we have that $\mathrm{FC}(G/N)$ is finite-by-abelian by Fact \ref{F:FC} and hence $G/N$ is finite-by-abelian-by-finite. 
\end{proof}

As a consequence we obtain a generalization of Neumann's theorem \cite{pN89}.
\begin{cor}\label{C:1}
Let $G$ be a definable group. Suppose that $G$ admits a compatible $2$-system of ideals of $G\times G$ with $\mu_G$ satisfying the right translational $\mathrm{S}1$ property. Assume further that the set 
$$
w_{G,2}^{-1}(1) = \{(x,y)\in G^2 : [x,y]=1 \}
$$ 
is $\mu_{G^{2}}$-wide. We have that the group $G$ is finite-by-abelian-by-finite.
\end{cor}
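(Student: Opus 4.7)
The plan is to reduce the corollary directly to Lemma \ref{L:key} by taking the trivial normal subgroup $N = \{1\}$ (which is visibly $\emptyset$-ind-definable) and the ambient ideal $\mu = \mu_G$. The ``furthermore'' clause of the lemma produces the conclusion that $G/N = G$ is finite-by-abelian-by-finite, provided we can exhibit elements $b, a \in G$ with $[b,a] = 1$, $\tp(b)$ $\mu_G$-wide, and $\tp(a/b)$ $\mu_G$-wide.

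To produce such a pair I would unpack the coordinatewise $(\mu_G, \mu_G)$-compatibility of $\mu_{G^2}$ applied to the set $Y = w_{G,2}^{-1}(1)$, which is $\emptyset$-definable (since $[x,y]=1$ uses only the group operations) and, by hypothesis, $\mu_{G^{2}}$-wide. Condition $(\dagger)$ then yields some $(a_1, a_2) \in Y$ with $\tp(a_1)$ $\mu_G$-wide and $\tp(a_2/a_1)$ $\mu_G$-wide. Setting $b := a_1$ and $a := a_2$ we have $[b,a] = 1 \in N$ together with the two required wideness statements, so Lemma \ref{L:key} applies and closes the argument.

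There is essentially no obstacle here: the definition of a compatible $2$-system of ideals is tailored to deliver exactly the kind of pair that Lemma \ref{L:key} consumes. The only small point to keep track of is the asymmetry between $b$ and $a$ in the lemma's hypothesis—namely that it is $\tp(a/b)$, not $\tp(b/a)$, which must be wide—so one has to read off the coordinates of $(a_1,a_2)$ in the correct order, as done above. Once this matching is made, the corollary follows immediately; in particular, no further translation-invariance or Fubini-type manipulation is required beyond what is already packaged into the lemma.
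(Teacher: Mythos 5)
Your proposal is correct and coincides with the paper's own proof: both extract a commuting pair from the wide set $w_{G,2}^{-1}(1)$ via coordinatewise compatibility and feed it into the ``furthermore'' clause of Lemma \ref{L:key} with $N=\{1\}$. Your care with the order of the coordinates (matching $\tp(a/b)$ wide and $\tp(b)$ wide to the lemma's hypotheses) is exactly the right bookkeeping, and nothing further is needed.
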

\begin{proof}
Since the set of elements $(x,y)$ of $G\times G$ such that $x$ and $y$ commute is definable without parameters and $\mu_{G^{2}}$-wide, there exists a commuting pair $(a,b)$ such that the types $\tp(a)$ and $\tp(b/a)$ are $\mu_G$-wide by the assumption on the compatibility of ideals. Thus, by Lemma \ref{L:key} we obtain the statement.
\end{proof}
Before proceeding, we introduce some notation and recall the following generalization of nilpotent group, as well as of FC-group.
\noindent For a natural number $n$, the {\em $n$th iterated FC-center of $G$} is defined inductively by
$$
\FC_0(G)=\{1\} \ \mbox{ and } \ \FC_{n+1}(G)=\FC_G(G/\FC_n(G)).
$$
Note by Remark \ref{R:FC-ind-def} that the subgroups $\mathrm{FC}_n(G)$ are $\emptyset$-ind-definable for every $n$.

\begin{definition}
	A group $G$ is {\em FC-nilpotent} of class $n$ if $G=\mathrm{FC}_n(G)$. 
\end{definition}

\begin{proposition}\label{P:M1}
 Let $G$ be a definable group and let $n\ge 2$. Suppose that $G$ admits a coordinatewise $(\mu_G,\ldots,\mu_G)$-compatible system of ideals of $G^n$ with $\mu_G$ having the right translational $\mathrm{S}1$ property. Assume further that the set 
 $$
 w_{G,n}^{-1}(1) = \left\{(x_1,\ldots,x_{n})\in G^n : [x_1,\ldots,x_{n}] = 1 \right\}
 $$
 is $\mu_{G^{n}}$-wide. We have that $\mathrm{FC}_{n-1}(G)$ has finite index in $G$. In particular, the group $G$ has an $\emptyset$-definable finite index characteristic subgroup which is FC-nilpotent of class less than $n$.
\end{proposition}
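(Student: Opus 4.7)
The plan is to pick a well-chosen tuple inside $w_{G,n}^{-1}(1)$ and then climb the upper FC-central series of $G$ one commutator bracket at a time. By the coordinatewise $(\mu_G,\ldots,\mu_G)$-compatibility applied to the $\emptyset$-definable $\mu_{G^n}$-wide set $w_{G,n}^{-1}(1)$, I can pick $(a_1,\ldots,a_n)\in G^n$ with $[a_1,\ldots,a_n]=1$ such that $\tp(a_1)$ and each $\tp(a_{i+1}/a_1,\ldots,a_i)$, for $1\le i<n$, is $\mu_G$-wide.

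I would then prove by reverse induction on $k$, from $k=n-1$ down to $k=1$, the claim that $[a_1,\ldots,a_k]\in\mathrm{FC}_{n-k}(G)$. Each subgroup $\mathrm{FC}_{n-k-1}(G)$ is $\emptyset$-ind-definable and normal in $G$ by iteration of Remark \ref{R:FC-ind-def}, which validates the repeated use of Lemma \ref{L:key}. Using the identity $[a_1,\ldots,a_{k+1}]=[[a_1,\ldots,a_k],a_{k+1}]$ together with the variant of Lemma \ref{L:key} stated immediately after its proof --- applied to the word $w_k$, the element $a_{k+1}$ and $N=\mathrm{FC}_{n-k-1}(G)$ --- the inductive hypothesis $[a_1,\ldots,a_{k+1}]\in N$ yields $[a_1,\ldots,a_k]\in\mathrm{FC}_G(G/N)=\mathrm{FC}_{n-k}(G)$. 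The base case $k=n-1$ uses $N=\{1\}=\mathrm{FC}_0(G)$ and the fact that $[a_1,\ldots,a_n]=1$.

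Evaluating the claim at $k=1$ produces $a_1\in\mathrm{FC}_{n-1}(G)$. Since $\mathrm{FC}_{n-1}(G)$ is $\emptyset$-ind-definable, write it as a union of $\emptyset$-definable pieces $\bigcup_i X_i$; then $a_1\in X_i$ for some $i$, and wideness of $\tp(a_1)$ forces this $X_i$ to be $\mu_G$-wide. Lemma \ref{L:WideInd} now implies that $\mathrm{FC}_{n-1}(G)$ is $\emptyset$-definable and has finite index in $G$. It is characteristic as a term of the upper FC-central series, and a short induction comparing the upper FC-series of $H=\mathrm{FC}_{n-1}(G)$ with that of $G$ (showing $\mathrm{FC}_i(G)\cap H\subseteq\mathrm{FC}_i(H)$, using that $|H:C_G(x/\mathrm{FC}_i(G))\cap H|\le|G:C_G(x/\mathrm{FC}_i(G))|$) confirms that $H$ is FC-nilpotent of class at most $n-1$, giving the \emph{in particular} clause.

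The main obstacle is the type-theoretic bookkeeping of the induction step: to apply Lemma \ref{L:key} with $b=[a_1,\ldots,a_k]$ I need $\tp(a_{k+1}/b)$ to be $\mu_G$-wide, which follows from the hypothesis $\tp(a_{k+1}/a_1,\ldots,a_k)$ wide because $b\in\mathrm{dcl}(a_1,\ldots,a_k)$, so any formula over $b$ satisfied by $a_{k+1}$ is already in that larger type. Everything else is a matter of keeping track of the $\emptyset$-ind-definability of the normal subgroups $\mathrm{FC}_i(G)$, which is exactly what Remark \ref{R:FC-ind-def} guarantees.
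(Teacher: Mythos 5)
Your proposal is correct and follows essentially the same route as the paper: choose a tuple in $w_{G,n}^{-1}(1)$ with successively $\mu_G$-wide types via coordinatewise compatibility, climb the FC-central series one bracket at a time using the word variant of Lemma \ref{L:key}, conclude $a_1\in\mathrm{FC}_{n-1}(G)$ and apply Lemma \ref{L:WideInd}, then compare the FC-series of $H=\mathrm{FC}_{n-1}(G)$ with that of $G$. The only differences are cosmetic (reverse indexing of the induction, and you prove the one inclusion $\mathrm{FC}_i(G)\cap H\subseteq\mathrm{FC}_i(H)$ where the paper states an equality), and your explicit $\mathrm{dcl}$ remark is exactly the point the paper's ``in particular'' addendum to Lemma \ref{L:key} is designed to cover.
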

\noindent Note that as a consequence the group $G$ is FC-nilpontent of class $n$.
\begin{proof}
Assume $n\ge 2$ and let $(a_1,\ldots,a_{n})$ be an $n$-tuple of elements of $G$ such that $[a_1,\ldots,a_{n}]=1$ and that the types $\tp(a_1)$ and $\tp(a_{k+1}/a_1,\ldots,a_{k})$ are $\mu_G$-wide for $k=1,\ldots,n-1$.  We show recursively on $i\le n-1$ that 
$$
[a_1,\ldots,a_{n-i}] \in \mathrm{FC}_{i}(G).
$$
Since the subgroup $\mathrm{FC}_0(G)$ is the trivial subgroup, the claim holds trivially for $i=0$. Now, suppose that we have already proved that
$$
[a_1,\ldots,a_{n-i}] \in \mathrm{FC}_{i}(G)
$$
holds. Since $\tp(a_{n-i}/a_1,\ldots,a_{n-(i+1)})$ is $\mu_G$-wide, using Lemma \ref{L:key} we obtain that  
$$
[a_1,\dots,a_{n-(i+1)}] \in \mathrm{FC}_G(G/\mathrm{FC}_i(G)) = \mathrm{FC}_{i+1}(G),
$$ 
as desired. Hence, this yields that $a_1$ belongs to $\mathrm{FC}_{n-1}(G)$ and so we obtain by Lemma \ref{L:WideInd} that $\mathrm{FC}_{n-1}(G)$ is definable and has finite index in $G$, since $\tp(a_1)$ is $\mu_G$-wide. 

Finally, set $H = \mathrm{FC}_{n-1}(G)$. Since $H$ has  finite index in $G$, we see inductively on $i<n-1$ that 
\[
H\cap \mathrm{FC}_{i+1}(G)=\mathrm{FC}_H(G/\mathrm{FC}_i(G)) = \mathrm{FC}_H(H/H\cap \mathrm{FC}_i(G)) = \mathrm{FC}_{i+1}(H)
\] and so $H=\mathrm{FC}_{n-1}(H)$, as desired.  
\end{proof}

\begin{theorem}\label{T:M2}
 Let $G$ be a definable group. Suppose that $G$ admits a compatible system of ideals $\{\mu_{G^k}\}_{1\le k\le n}$ and that $\mu_G$ has the right translational $\mathrm{S}1$ property. Let $N$ be a definable normal subgroup of $G$  and assume that the set $w_{G,n}^{-1}(N)$
 is $\mu_{G^{n}}$-wide. We have that there exists a definable, over the same parameters as $N$, normal subgroup $H$ of $G$ such that $H/N$ is nilpotent of class less than $n$ and $G/H$ has finite exponent.
\end{theorem}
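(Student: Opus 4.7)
The plan is to first locate, as in Proposition \ref{P:M1}, a definable finite-index subgroup of $G$ whose structure modulo $N$ is FC-nilpotent of class at most $n-1$ with uniformly bounded FC-indices at each stage, and then extract from this a nilpotent subgroup with quotient of finite exponent. First I would invoke Lemma \ref{L:Relation}(2) on the compatible system $\{\mu_{G^k}\}_{1\le k\le n}$ to obtain that $\mu_{G^n}$ is coordinate-wise $(\mu_G,\ldots,\mu_G)$-compatible; applied to the $\mu_{G^n}$-wide set $w_{G,n}^{-1}(N)$ (definable over the parameter set $A$ of $N$), this yields a tuple $(a_1,\ldots,a_n)\in w_{G,n}^{-1}(N)$ such that $\tp(a_1/A)$ and each $\tp(a_{i+1}/A,a_1,\ldots,a_i)$ for $1\le i<n$ are $\mu_G$-wide.

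Next I would set up the iterated FC-tower $M_0:=N$ and $M_{i+1}:=\mathrm{FC}_G(G/M_i)$, which by iterated use of Remark \ref{R:FC-ind-def} consists of $A$-ind-definable normal subgroups of $G$. Following the inductive scheme of Proposition \ref{P:M1}, successive applications of Lemma \ref{L:key} with $b=[a_1,\ldots,a_{n-i-1}]$ and $a=a_{n-i}$ give $[a_1,\ldots,a_{n-i}]\in M_i$, and in particular $a_1\in M_{n-1}$; since $\tp(a_1/A)$ is $\mu_G$-wide, Lemma \ref{L:WideInd} forces $M_{n-1}$ to be $A$-definable of finite index in $G$. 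By $\aleph_1$-saturation, the ind-definable presentation $M_{n-1}=\bigcup_k\{x:|G:C_G(x/M_{n-2})|\le k\}$ stabilizes at some bound $k_{n-1}$; since $M_i\le M_{n-1}$ and $[G:M_{n-1}]<\infty$, each quotient $M_i/M_{i-1}$ coincides with $\mathrm{FC}(M_{n-1}/M_{i-1})$. Iterating the compactness argument down the tower yields uniform BFC bounds at every stage, so that $M_{n-1}/N$ is uniformly BFC-nilpotent of class at most $n-1$.

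The final extraction of $H$ would proceed by a group-theoretic argument independent of the model theory: by induction on nilpotency class, using the quantitative consequence of Neumann's BFC theorem that the quotient of a BFC group by its center has bounded exponent, one shows that a uniformly BFC-nilpotent group of class $c$ admits a characteristic nilpotent subgroup of class at most $c$ whose quotient has finite exponent. Applied to $M_{n-1}/N$, this yields a characteristic nilpotent subgroup of class $\le n-1$ whose quotient in $M_{n-1}/N$ has finite exponent; pulling back to $G$ produces an $A$-definable normal subgroup $H\trianglelefteq G$ containing $N$, with $H/N$ nilpotent of class $<n$. Since $G/H$ is an extension of the finite group $G/M_{n-1}$ by a group of finite exponent, it has finite exponent.

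The main obstacle is the last step: while passing from the wideness of $w_{G,n}^{-1}(N)$ to the FC-nilpotent tower is a direct adaptation of Proposition \ref{P:M1}, extracting a genuinely nilpotent subgroup from a uniformly FC-nilpotent group with bounded-exponent quotient requires an iterated quantitative refinement of Neumann's theorem tracking the exponents of successive quotients by centers, and one must verify that the resulting subgroup is truly $A$-definable (rather than merely ind-definable), which relies on the uniformity of the BFC bounds together with $\aleph_1$-saturation.
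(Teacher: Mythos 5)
Your opening step (extracting a tuple $(a_1,\ldots,a_n)\in w_{G,n}^{-1}(N)$ with iterated wide types and running the tower $M_0=N$, $M_{i+1}=\mathrm{FC}_G(G/M_i)$ to get $[a_1,\ldots,a_{n-i}]\in M_i$ and hence $M_{n-1}$ definable of finite index) is a correct adaptation of Proposition \ref{P:M1} modulo $N$. The proof breaks, however, at the claim that ``iterating the compactness argument down the tower yields uniform BFC bounds at every stage.'' The compactness argument that produces a single bound $k$ with $M_i=\{x: |G:C_G(x/M_{i-1})|\le k\}$ needs $M_i$ to be type-definable (in practice: definable), because it rests on the inconsistency of the decreasing family of type-definable sets ``index $>k$'' intersected with $M_i$. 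Only the top stage $M_{n-1}$ is known to be definable, and only because it contains the element $a_1$ whose type is $\mu_G$-wide, so Lemma \ref{L:WideInd} applies. The intermediate stages $M_1,\ldots,M_{n-2}$ contain only the elements $[a_1,\ldots,a_{n-i}]$, whose types have no wideness, so these stages remain merely ind-definable and there is no reason for their presentations to stabilize. Knowing $[G:M_{n-1}]<\infty$ does not help: $\mathrm{FC}(M_{n-1}/M_{i-1})$ is still just an increasing union of (at best ind-definable) pieces. Without a finite bound $k_i$ at each stage, your group-theoretic extraction cannot start (the $a^{k_i!}\in C_G(x/M_{i-1})$ trick needs a single finite $k_i$), and the centralizers $C_G(M_i/M_{i-1})$ you would intersect are only type-definable, so the resulting $H$ would not be definable. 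You flag a definability worry in your last paragraph, but you locate it in the final extraction rather than where it actually lies, namely in the unproved uniformity of the tower.

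The paper avoids this entirely by inducting on $n$ rather than building the whole FC-tower first. From the single wide centralizer $C_G([a_1,\ldots,a_{n-1}]/N)$, of some finite index $k$, it forms the genuinely definable set $X=\{x\in G: |G:C_G(x/N)|\le k\}$ (definable because $N$ is definable and $k$ is an explicit integer), puts $H=C_G(X/N)$ and $K=C_G(H/N)$, and observes that $w_{G,n-1}^{-1}(K)$ is $\mu_{G^{n-1}}$-wide because $\tp(a_1,\ldots,a_{n-1})$ is $\mu_{G^{n-1}}$-wide as a block --- note that your fully split application of Lemma \ref{L:Relation}(2) discards exactly this block wideness, which is what feeds the induction. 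The inductive hypothesis applied to the definable normal subgroup $K$ then supplies $F$ with $F/K$ nilpotent of class $<n-1$ and $G/F$ of finite exponent, and the conclusion follows from $a^{k!}\in H$ for all $a$ together with $\gamma_n(F\cap H)\le[\gamma_{n-1}(F),H]\le[K,H]\le N$. In effect, each inductive step certifies one definable level with one explicit bound, which is precisely the uniformity your tower-first approach asserts but does not establish.
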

\begin{proof}We may assume that $N$ is definable without parameters. We prove the statement by induction on $n$. For $n=1$, note that our assumptions yield that the definable subgroup $N$ is $\mu_G$-wide and thus  $G/N$ is finite by Fact \ref{F:FinIndex}. Hence, it suffices to set $H=N$.
	
Assume $n\ge 2$. As $\{\mu_{G^k}\}_{1\le k\le n}$ is a compatible system of ideals, we can find $a_1,\ldots,a_{n}$ of $G$ such that $\tp(a_1,\ldots,a_{n-1})$ is $\mu_{G^{n-1}}$-wide and $\tp(a_{n}/a_1,\ldots,a_{n-1})$ is $\mu_{G}$-wide, and that $[a_1,\ldots,a_n]\in N$. Thus $a_n\in C_G([a_1,\ldots,a_{n-1}]/N)$. Since $C_G([a_1,\ldots,a_{n-1}]/N)$ is definable over $a_1,\ldots,a_{n-1}$, it is $\mu_G$-wide and so has finite index in $G$, say $k$, by Fact \ref{F:FinIndex}. Let $X$ be the set 
$$
X=\left\{ x\in G : | G: C_G(x/N) | \le k \right\}
$$ 
and note that it is definable without parameters, as so is $N$. Set $H$ to be the definable normal subgroup $C_G(X/N)$ and let $K=C_G(H/N)$, a definable normal group. Note that both groups contain $N$ and are definable without parameters. Moreover, we have that $[a_1,\ldots,a_{n-1}]\in X \subseteq K$ and consequently
\[
(a_1,\ldots,a_{n-1})\in w_{G,n-1}^{-1}(K).
\] 
Thus, the set $w_{G,n-1}^{-1}(K)$ is $\mu_{G^{n-1}}$-wide, as so is $\tp(a_1,\ldots,a_{n-1})$. Hence, the inductive hypothesis yields that $G/K$ is (nilpotent of class less than $n-1$)-by-(finite exponent).

Let $F$ be a definable normal subgroup of $G$ containing $K$ such that $F/K$ is nilpotent of class less than $n-1$ and $G/F$ has finite exponent. Moreover, the subgroup $F$ is definable over the same parameters as $N$. We see now that $G/(F\cap H)$ has finite exponent and that $(F\cap H)/N$ is nilpotent of class less than $n$, which yields the result since $F\cap H$ is normal in $G$ and definable over the same parameters as $N$. To show this, observe that $C_{G}(x/N)$ has index at most $k$ in $G$ for every $x$ in $X$ and so 
$$
a^{k!} \in \bigcap_{x\in X} C_G(x/N) = C_G(X/N) = H
$$
for every element $a$ of $G$. Thus $G/H$ has exponent at most $k!$, which yields that $G/(F\cap H)$ has finite exponent. To prove that $(F\cap H)/N$ is nilpotent of class less than $n$, note that the $(n-1)$-th subgroup $\gamma_{n-1}(F)$ of the lower central series of $F$ is contained in $K=C_G(H/N)$. Thus $[\gamma_{n-1}(F),H] \le N$ and so
$$
\gamma_{n}(F\cap H)=[\gamma_{n-1}(F\cap H), F\cap H] \le [\gamma_{n-1}(F),H] \le N,
$$ 
yielding that $(F\cap H)/N$ is nilpotent of class less than $n$, as desired.
\end{proof}
 
 \begin{remark}
 	The proof of Theorem \ref{T:M2} yields that the subgroup $H$ is definable in the pure language of groups, when so is $N$. Indeed, we define $H=C_G(X/N)$ where $X$ is the set elements $x\in G$ such that $|G:C_G(x/N)|\le k$ for some fixed $k\ge 1$. Hence, we  obtain the following:
 	\begin{enumerate}
 		\item Every group automorphism leaving $N$ invariant leaves $H$ invariant.
 		\item If $\{N_i\}_{i\in I}$ is a uniformly definable family of normal subgroups satisfying the hypothesis of Theorem \ref{T:M2}, then the family $\{H_i\}_{i\in I}$ is uniformly definable too. In particular, by
 		compactness, the exponent of $G/H_i$ is uniformly bounded.
 	\end{enumerate} 
 \end{remark}
 
 As an immediate consequence we obtain the following version of Shalev's result.
\begin{cor}\label{C:Main}
 Let $G$ be a definable group. Suppose that $G$ admits a compatible system of ideals with $\mu_G$ satisfying the right translational $\mathrm{S}1$ property. Assume that the set 
 $w_{G,n}^{-1}(1)$
 is $\mu_{G^{n}}$-wide. Then, the group $G$ has an $\emptyset$-definable finite index subgroup which is FC-nilpotent of class $n-1$ and there is an $\emptyset$-definable characteristic subgroup $H$ of $G$ which is nilpotent of class less than $n$ and $G/H$ is uniformly locally finite.\end{cor}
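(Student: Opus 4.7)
The plan is to combine Proposition \ref{P:M1} (which yields the FC-nilpotent finite index subgroup) with Theorem \ref{T:M2} applied to the trivial subgroup (which yields the characteristic nilpotent subgroup $H$ with a quotient of bounded exponent), and then to invoke Zel'manov's positive solution to the restricted Burnside problem to upgrade ``$G/H$ has finite exponent'' to ``$G/H$ is uniformly locally finite''.

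For the first assertion, I would first invoke Lemma \ref{L:Relation}(2): since $\{\mu_{G^k}\}_{1\le k\le n}$ is a compatible system of ideals, the ideal $\mu_{G^n}$ is coordinatewise $(\mu_G,\ldots,\mu_G)$-compatible. Combined with the right translational $\mathrm{S}1$ property of $\mu_G$ and with the wideness of $w_{G,n}^{-1}(1)$, this is exactly the hypothesis of Proposition \ref{P:M1}, which provides directly the $\emptyset$-definable finite index characteristic subgroup of $G$ that is FC-nilpotent of class $n-1$.

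For the second assertion, I would apply Theorem \ref{T:M2} to the trivial normal subgroup $N=\{1\}$, which is $\emptyset$-definable in the pure language of groups. The theorem then produces an $\emptyset$-definable normal subgroup $H$ of $G$, nilpotent of class less than $n$, such that $G/H$ has finite exponent. The remark following Theorem \ref{T:M2} then shows that $H$ is invariant under every automorphism of $G$, i.e.\ characteristic.

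The last step, and the one I expect to be the main non-trivial input, is to promote the finite exponent bound to uniform local finiteness. I would invoke Zel'manov's positive solution to the restricted Burnside problem, providing for each $d$ and exponent $e$ a uniform bound $r=r(d,e)$ on the orders of all finite $d$-generator groups of exponent $e$; taking $e$ to be the exponent of $G/H$ extracted from the proof of Theorem \ref{T:M2} (which is at most $k!$ for the $k$ appearing there), one obtains the bound on the $d$-generator subgroups of $G/H$. The crux really rests on Zel'manov's deep theorem rather than on the model-theoretic machinery developed earlier, and one must handle carefully the passage from a bound on \emph{finite} $d$-generator subgroups to a bound on all of them, using the saturation of $\M$ and the interpretability of $G/H$.
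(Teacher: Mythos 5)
Your first two steps match the paper exactly: Proposition \ref{P:M1} (via Lemma \ref{L:Relation}(2)) gives the $\emptyset$-definable finite index FC-nilpotent subgroup, and Theorem \ref{T:M2} applied to $N=\{1\}$ gives the $\emptyset$-definable characteristic subgroup $H$, nilpotent of class less than $n$, with $G/H$ of finite exponent. The gap is in your final step. Zel'manov's solution of the restricted Burnside problem bounds the orders of \emph{finite} $d$-generator groups of exponent $e$; it says nothing about whether a given $d$-generator group of exponent $e$ is finite. Since the general Burnside problem has a negative answer (there exist infinite finitely generated groups of bounded exponent, by Novikov--Adian and Golod), ``$G/H$ has finite exponent'' does not by itself imply that $G/H$ is locally finite, and no amount of saturation or compactness can repair this: saturation can upgrade ``locally finite'' to ``uniformly locally finite'', but it cannot rule out an infinite finitely generated torsion subgroup. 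The passage you flag as needing careful handling is therefore not merely delicate, it is impossible from the hypotheses you are using.

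The missing idea is to feed the output of the first part back into the second. Proposition \ref{P:M1} shows that $G$ is FC-nilpotent (of class $n$), hence so is the quotient $G/H$. A torsion FC-nilpotent group is locally finite --- this is the group-theoretic input the paper uses (\cite[14.5.8]{Rob}) in place of Zel'manov --- so $G/H$, being FC-nilpotent of finite exponent, is locally finite. Only then does the standard compactness argument in the $\aleph_1$-saturated ambient structure promote local finiteness to uniform local finiteness. (Once local finiteness is known, one could alternatively quote the restricted Burnside theorem to get the uniform bound, but the essential point you are missing is that FC-nilpotency, not Zel'manov's theorem, is what makes the $d$-generator subgroups finite in the first place.)
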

\begin{proof}
By Proposition \ref{P:M1}, there exists an $\emptyset$-definable normal subgroup of $G$ which is FC-nilpotent of class $n-1$. In particular, the group $G$ is FC-nilpotent of class $n$. On the other hand, applying Theorem \ref{T:M2} we also obtain an $\emptyset$-definable normal subgroup $H$ of $G$ such that $H$ is nilpotent of class less than $n$ and $G/H$ has finite exponent; note that $H$ is characteristic by the previous remark. Since $G$ and so $G/H$ are $\mathrm{FC}$-nilpotent, we have that the group $G/H$ is locally finite by \cite[14.5.8.]{Rob}. Hence, it is uniformly locally finite by a standard compactness argument, since the ambient structure is $\aleph_1$-saturated.  
\end{proof}

\section{Applications to concrete families}\label{s:Applications}

In this section we apply the previous results to concrete families of groups. 

\subsection{Finite groups}\label{s:FiniteGroups} 
Fix a non-principal ultrafilter  $\mathcal U$ on $\mathbb N$, that is, a non-trivial finitely additive $\{0,1\}$-valued probability measure on subsets of $\mathbb N$ with the property that all finite subsets on $\mathbb N$ have measure $0$. Given an infinite sequence $(G_n)_{n\in\mathbb N}$ of finite groups with $\lim_{n\to+\infty}|G_n|=+\infty$, denote by $\mu_n$ the normalized counting measure on $G_n$. Consider the ultraproduct $G=\prod_{\mathcal U} G_n$ and set $\mu^{\times k}$ to be the ultralimit $\lim_{\mathcal U} \mu_n^{\times k}$ in $G^{k}$. 

The language $\mathcal L$ is the pure language of groups and the ambient $\aleph_1$-saturated structure is $\mathcal M=(G,\cdot)$. It is easy to verify that each $\mu^{\times k}$  is a left and right translation-invariant finitely additive probability measure on $\D_{G^{k}}(\mathcal M)$. Let $\lambda_k$ be the ideal of definable subsets of $G^k$ of $\mu^{\times k}$-measure $0$. It then follows that the ideal $\lambda_1$ has the translational S$1$ property \cite[Example 2.12]{Hr1} and the family $\{\lambda_k\}_{k\in\mathbb N}$ satisfies the Fubini property on ideals. The latter can be easily seen using Fubini theorem in the finite setting together with \L o\'s's theorem (cf. \cite[Section 2]{BT14}). In particular, an ultraproduct of finite groups admits a compatible system of ideals by Lemma \ref{L:Relation}.

Hence, as an easy consequence of Corollary \ref{C:Main} we obtain the following, which corresponds to Theorem \ref{T:IntroFinite} in the Introduction:

\begin{theorem}\label{T:Finite}
Given $\epsilon>0$ and natural numbers $d,k\ge 1$, there exists some $r=r(\epsilon,d,k)$ such that given a finite group $G$  with $\mathrm{Pr}_{G,w_k}(1)\ge \epsilon$, there exists some characteristic subgroup $N$ of $G$ which is nilpotent of class less than $k$ and such that every $d$-generator subgroup of $G/N$ is of order at most $r$.
\end{theorem}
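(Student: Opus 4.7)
The plan is to argue by contradiction via an ultraproduct, reducing the finitary statement to the general model-theoretic result already established in Corollary \ref{C:Main}.

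Suppose, toward a contradiction, that the theorem fails for some fixed $\epsilon > 0$ and $d, k \geq 1$. Then for every $r \in \mathbb N$ there is a finite group $G_r$ with $\mathrm{Pr}_{G_r, w_k}(1) \geq \epsilon$ such that no characteristic subgroup $N$ of $G_r$ which is nilpotent of class less than $k$ has the property that every $d$-generator subgroup of $G_r/N$ is of order at most $r$. Passing to a subsequence if necessary, we may assume $|G_n| \to \infty$, giving an infinite sequence $(G_n)_{n \in \mathbb N}$ to which the setup of Subsection \ref{s:FiniteGroups} applies.

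Fix a non-principal ultrafilter $\mathcal U$ on $\mathbb N$ and form the ultraproduct $G = \prod_{\mathcal U} G_n$, equipped with the compatible system of ideals $\{\lambda_k\}_{k \geq 1}$ arising from the ultralimit of normalized counting measures. By \L o\'s's theorem, $w_{G,k}^{-1}(1)$ is precisely the ultraproduct of the sets $w_{G_n,k}^{-1}(1)$, and the ultralimit measure assigns it value at least $\epsilon$, so $w_{G,k}^{-1}(1)$ is $\lambda_k$-wide. Applying Corollary \ref{C:Main}, there is an $\emptyset$-definable characteristic subgroup $H$ of $G$ which is nilpotent of class less than $k$ and such that $G/H$ is uniformly locally finite. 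In particular, for our fixed $d$ there exists $r_0$ such that every $d$-generator subgroup of $G/H$ has order at most $r_0$.

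The last step is to push $H$ back down to the factors. Since $H$ is $\emptyset$-definable in the pure language of groups by some formula $\varphi(x)$, we have $H = \prod_{\mathcal U} H_n$ where $H_n = \varphi(G_n)$. Because $\varphi$ is parameter-free, each $H_n$ is characteristic in $G_n$. The properties \emph{$H$ is a subgroup nilpotent of class less than $k$} and \emph{every $d$-generator subgroup of $G/H$ has order at most $r_0$} are each expressible by single first-order sentences, so by \L o\'s's theorem they hold in $G_n$ for $\mathcal U$-almost all $n$. Choosing any such $n$ with $n \geq r_0$ contradicts the defining property of the sequence $(G_n)$, yielding $r = r(\epsilon, d, k) := r_0$.

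The only nontrivial step in this outline is the invocation of Corollary \ref{C:Main}, which is already proved. The main obstacle to watch for is ensuring that the characteristic property of $H$ genuinely descends: this is handled by the remark following Theorem \ref{T:M2}, which guarantees that $H$ is defined by a parameter-free formula invariant under all group automorphisms, so characteristicness automatically transfers to the finite quotients $H_n$.
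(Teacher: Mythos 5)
Your proposal is correct and follows essentially the same route as the paper: negate the statement, form a non-principal ultraproduct of counterexamples, apply Corollary \ref{C:Main} to get the $\emptyset$-definable nilpotent subgroup $H$, and descend to the factors via \L o\'s's theorem. The only cosmetic difference is the final step: you encode ``every $d$-generator subgroup of $G/H$ has order at most $r_0$'' as a single first-order sentence and push it down, whereas the paper lifts putative large $d$-generator subgroups of the quotients $G_n/H_n$ back up to $G/H$ to contradict local finiteness --- both are valid.
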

\begin{proof}Suppose not. Thus, we can find some $\epsilon>0$ and some natural numbers $d,k\ge 1$ such that for every $n$, there exists a finite group $G_n$ with $\mathrm{Pr}_{G_n,w_k}(1)\ge \epsilon$ but $G_n/N$ has a $d$-generator subgroup of order at least $n$, for every characteristic subgroup $N$ of $G_n$ which is nilpotent of class less than $k$.
	
Let $G$ be the ultraproduct $\prod_{\mathcal U}G_n$ with respect to some non-principal ultrafilter $\mathcal U$. By the remarks above, the group $G$ satisfies the assumptions of Corollary \ref{C:Main} and thus, there exists an $\emptyset$-definable nilpotent subgroup $H$ of $G$, of nilpotency class less than $k$, such that $G/H$ is uniformly locally finite. In particular, every $d$-generator subgroup of $G/H$ is finite.

Set $H_n$ to be the trace in $G_n$ of the formula without parameters, in the pure language of groups, defining the subgroup $H$. By \L o\'s's theorem, since being nilpotent of class less than $k$ is first-order expressible, there are infinitely many natural numbers $n$ where $H_n$ is a characteristic subgroup of $G_n$ which is nilpotent of class less than $k$. Thus, by the choice of $G_n$ there are elements $g_{n,1},\ldots,g_{n,d}$ in $G_n$ such that the group $\langle g_{n,1}H_n,\ldots,g_{n,d} H_n \rangle / H_n$ has size at least $n$. However, if we let $g_i$ to denote the element of $G$ with the sequence $(g_{n,i})_{n\in\mathbb N}$ as a representative, then we obtain that $\langle g_{1}H,\ldots,g_{d} H \rangle / H$ is infinite, a contradiction. 
\end{proof}

As an immediate consequence we obtain in the finite setting a non-effective version of \cite[Theorem 1.1]{Sha} and of \cite[Corollary 1.4]{Sha}.

\subsection{Amenable groups}  A group $G$ is called {\em amenable} if it has a finitely additive right-transitive probability measure on $\mathcal P(G)$. In fact, this is equivalent to the existence of a right-invariant {\em mean} on $G$. We remind here that a mean is a linear map $\int \,\mathrm{d}\mu : \ell_\infty(G) \to \mathbb R$ such that $\int \mathds 1_{G} \,\mathrm{d}\mu = 1$ and it is positive in the sense that $\int f\, \mathrm{d}\mu \ge 0$ if $f\ge 0$ pointwise, for every $f\in \ell_\infty(G)$. The mean $\int \mathrm{d}\mu$ is right-invariant if $\int f\cdot g\, \mathrm{d}\mu = \int f\, \mathrm{d}\mu$ for every $g\in G$ and every $f\in \ell_\infty(G)$, where $f\cdot g(x) = f(xg^{-1})$ for every $x\in G$. 

Given two amenable groups $G$ and $H$, one can define a right-invariant mean on $G\times H$ via
\[
\int_{x\in G} \left( \int_{y\in H} f(x,y) \, \mathrm{d}\mu_G\right)  \mathrm{d}\mu_H,
\]
making $G\times H$ an amenable group. In particular, the following expression yields a finitely additive right-invariant probability measure on subsets of $G\times H$: 
\[
\mu_G\rtimes \mu_H(A) = \int_{x\in G} \left( \int_{y\in H} \mathds 1_A \, \mathrm{d}\mu_G\right)  \mathrm{d}\mu_H.
\]
We denote by $\mu^{\rtimes 2}$ the product measure  $\mu\rtimes \mu$ on $G^2$ and by $\mu^{\rtimes n+1}$ the product measure $\mu \rtimes \mu^{\rtimes n}$ on $G^{n+1}$.

Let $G$ be an amenable group and consider the product measures $\mu^{\rtimes n}$ for $n$ in $\mathbb N$. One can restrict each measure to the boolean algebra of definable, in the language of groups  $\mathcal L_{\mathrm{gr}}$, subsets of the corresponding $G^n$. Abusing notation, we denote such restriction by $\mu^{\rtimes n}$ as well. As explained in Example \ref{E:Meas}, we can expand the language of groups $\mathcal L_{\mathrm{gr}}$ to a countable language $\mathcal L^\mu$ in such a way that each measure $\mu^{\rtimes n}$ becomes definable: indeed, for each $n$, each $r\in \mathbb Q\cap [0,1]$ and each formula $\phi(x_1,\ldots,x_n, \bar y)$ without parameters in the language of groups, we put a predicate $Q_{n,r,\phi}(\bar y)$ such that 
\[
Q_{n,r,\phi}(\bar b)  \text{ holds } \ \Leftrightarrow \ \mu^{\rtimes n}(\phi(G^n,\bar b))\ge r.
\]
The predicates $Q_{n,r,\phi}(\bar y)$ give rise to new definable sets, which will also be measurable, since the corresponding Cartesian product of $G$ is amenable. Hence we can repeat the process adding new predicates $Q_{n,r,\phi}$ for these new formulas. Iterating this process we obtain the desired countable language $\mathcal L^\mu$.

As a consequence, seeing an amenable group $G$ as an $\mathcal L^\mu$-structure, the value of the measure of an $\mathcal L^\mu(G)$-formula does not depend on the model of the theory of $G$ that we are working in.

\begin{remark} Let $G$ be an amenable group seen as an $\mathcal L^\mu$-structure and let $\Gamma$ be an elementary extension of $G$. For each $i<n$, each $r\in\mathbb Q\cap [0,1]$ and each formula $\phi(x_i,x_{n-i};\bar y)$ in  $\mathcal L^\mu$, where $|x_i|=i$ and $|x_{n-i}|=n-i$, we have that if
$\mu^{\rtimes n}(\phi(\Gamma^n,\bar b))\ge r$, then the $\mathcal L^\mu$-definable set
\[
X_{i,r,\phi}=\left\{x \in \Gamma^i : \mu^{\rtimes n-i}(\phi(x,\Gamma^{n-i},\bar b))\ge \frac{r}{2} \right\} 
\]
has measure at least $r/2$.
\end{remark}
\begin{proof}
	Consider the definable set $Z =\{ a\in G^i : \mu^{\rtimes n-i}(\phi(a,G^{n-i},\bar b))\ge r/2\}$ and note that it is definable by the $\mathcal L^\mu$-formula $\psi(x_i,\bar b)$ given by $Q_{n-i,\frac{r}{2},\phi'}(x_i,\bar b)$, where $\phi'(x_{n-i};x_i,y)$ is the formula $\phi(x_i,x_{n-i},\bar y)$ with a distinct separation of variables. 
	
	An easy computation yields that
\begin{align*}
\mu^{\rtimes n}(\phi(G^n,\bar b)) & = \int_{a\in Z} \mu^{\rtimes n-i}(\phi(a,G^{n-i},\bar b)) \, \mathrm{d}\mu^{\rtimes i} +  \int_{a\not\in Z} \mu^{\rtimes n-i}(\phi(a,G^{n-i},\bar b)) \, \mathrm{d}\mu^{\rtimes i} \\
& \le \int_{a\in Z} 1\, \mathrm{d}\mu^{\rtimes i} + \int_{a\not\in Z}  \frac{r}{2} \, \mathrm{d}\mu^{\rtimes i} \le \mu(Z) + \frac{r}{2}
\end{align*}
and so $\mu(Z)\ge r/2$ if $\mu^{\rtimes n}(\phi(G^n,\bar b))\ge r$. This implies that in $G$, for any tuple $\bar b$ we have that  $Q_{1,\frac{r}{2},\psi}(\bar b)$ holds whenever so does $Q_{n,r,\phi}(\bar b)$. Hence, the same is true for any tuple of $\Gamma$ since $\Gamma$ is an elementary extension of $G$, which yields the statement.
\end{proof}	
\begin{cor}\label{C:DefAmCompatible}
	Let $G$ be an infinite amenable group and let $\Gamma$ be an $\aleph_1$-saturated elementary extension of $G$, as $\mathcal L^\mu$-structures. The family of definable measures $\{\mu^{\rtimes n}\}_{1\le n}$ induces a compatible system of ideals on the boolean algebra of $\mathcal L^\mu$-definable subsets of $\Gamma$.
\end{cor}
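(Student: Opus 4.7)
The plan is to verify, by unwinding Definition~\ref{D:CSI}, that for each $1 \le i < n$ the null ideal $\mu_n$ of $\mu^{\rtimes n}$ on $\Gamma^n$ is coordinatewise $(\mu_i, \mu_{n-i})$-compatible. I would fix a parameter set $A$ and an $A$-definable $\mu_n$-wide set $Y \subseteq \Gamma^n$; writing $\Gamma^n = \Gamma^i \times \Gamma^{n-i}$, we may assume $\mu^{\rtimes n}(Y) \ge r$ for some rational $r > 0$ and view $Y$ as cut out by some $\mathcal L^\mu(A)$-formula $\phi(x_i, x_{n-i})$.

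The main input is the Remark preceding the statement, applied in $\Gamma$ to the formula $\phi$. It provides the $\mathcal L^\mu(A)$-definable set
\[
X_{i,r,\phi} = \left\{ a \in \Gamma^i : \mu^{\rtimes n-i}(Y_a) \ge r/2 \right\},
\]
where $Y_a$ denotes the $a$-fiber, and guarantees $\mu^{\rtimes i}(X_{i,r,\phi}) \ge r/2$; in particular $X_{i,r,\phi}$ is $\mu_i$-wide. Using the wide-type extension principle recalled at the start of Section~\ref{s:Ideal} -- every wide partial type extends to a wide complete global type -- I would pick some $a \in X_{i,r,\phi}$ whose type $\tp(a/A)$ is $\mu_i$-wide. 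By construction of $X_{i,r,\phi}$, the fiber $Y_a$ is an $A \cup \{a\}$-definable set of $\mu^{\rtimes n-i}$-measure at least $r/2$, hence $\mu_{n-i}$-wide. Applying the extension principle once more inside $Y_a$ yields $b \in Y_a$ with $\tp(b/A, a)$ being $\mu_{n-i}$-wide, and the pair $(a,b) \in Y$ witnesses the condition $(\dagger)$ required of coordinatewise compatibility.

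The only non-trivial content is packaged inside the preceding Remark, which is a Fubini-style estimate in $G$ transferred to $\Gamma$ via the elementarity of the predicates $Q_{n,r,\phi}$. Everything else is standard bookkeeping together with an invocation of the wide-type extension lemma from Section~\ref{s:Ideal}, so I do not foresee a substantial obstacle; the only mild subtlety is that the cutoff $r/2$ produced by the Remark is already an $\mathcal L^\mu$-definable quantity, which is what makes $X_{i,r,\phi}$ available as an $A$-definable set to which the extension lemma applies.
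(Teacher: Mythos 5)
Your argument is correct and follows essentially the same route as the paper: reduce to checking that $\mu^{\rtimes n}$ is coordinatewise $(\mu^{\rtimes i},\mu^{\rtimes n-i})$-compatible, invoke the preceding Remark to get the definable, $\mu^{\rtimes i}$-wide set $X_{i,r,\phi}$, pick $a$ with $\tp(a/A)$ wide by the wide-type extension principle together with saturation, note that the fiber $Y_a$ has measure at least $r/2$ and is therefore wide over $A\cup\{a\}$, and extend once more to find $b$. This matches the paper's proof step for step, including the role of the definability of the predicates $Q_{n,r,\phi}$ in making $X_{i,r,\phi}$ definable.
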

\begin{proof}
It suffices to show that $\mu^{\rtimes n}$ is coordinatewise $(\mu^{\rtimes i}, \mu^{\rtimes n-i})$-compatible for any $1\le i<n$. To see this, let $Y\subset \Gamma^n$ be $A$-definable, defined by an $\LL_\mu(A)$-formula $\phi(x_i,x_{n-i})$, with $\mu^{\rtimes n}(Y)\ge r>0$, for some $r\in (0,1]\cap \mathbb Q$. By compactness and $\aleph_1$-saturation, the previous remark implies that there exists some $a\in X_{i,r,\phi}$ such that $\tp(a/A)$ is $\mu^{\rtimes i}$-wide and $\mu^{\rtimes n-i}(Y_{a})\ge r/2$, where $Y_a$ is the set defined by $\phi(a,x_{n-i})$. As $\mu^{\rtimes n-i}(Y_a)\ge r/2$, again by compactness and $\aleph_1$-saturation we obtain some $b$ in $\Gamma^{n-i}$ whose type over $A\cup\{a\}$ is $\mu^{\rtimes n-i}$-wide and $(a,b)\in Y$. Hence, we obtain the result.
\end{proof}

As an immediate consequence of Theorem \ref{T:M2}, we obtain Theorem \ref{T:IntroAmen} from the introduction:

\begin{theorem}\label{T:Amen}
	Let $G$ be an infinite amenable group with a finitely additive right-invariant probability measure $\mu$ and suppose that
	\[
	\mu^{\rtimes k}(w_{G,k}^{-1}(1)) = \int_{x_1\in G} \left( \dots \left( \int_{x_k\in G} \mathds{1}_{w_{G,k}^{-1}(1)} (x_1,\ldots,x_k) \,\mathrm{d} \mu \right) \ldots \right) \,\mathrm{d} \mu >0.
	\]
	We have that $G$ has a finite index $\mathcal L_{\mathrm{gr}}$-definable subgroup which is FC-nilpotent of class at most $k-1$ and there exists some $\mathcal L_{\mathrm{gr}}$-definable subgroup $H$ of $G$ which is nilpotent of class less than $k$ and such that $G/H$ is uniformly locally finite.
\end{theorem}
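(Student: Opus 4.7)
The plan is to realize $G$ as a member of an elementary class in which the measure hypotheses of Corollary \ref{C:Main} are available, apply the corollary inside a saturated elementary extension, and then pull the resulting group-theoretic information back to $G$.

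First, view $G$ as an $\mathcal L^\mu$-structure via the expansion procedure recalled just before Corollary \ref{C:DefAmCompatible}, and let $\Gamma \succ G$ be an $\aleph_1$-saturated elementary extension. By Corollary \ref{C:DefAmCompatible}, the null-ideals $\mu^{\rtimes n}$ on the $\mathcal L^\mu$-definable subsets of $\Gamma^n$ form a compatible system of ideals. I would check the right translational $\mathrm{S}1$ property of $\mu_\Gamma = \mu^{\rtimes 1}$ directly from right-invariance: given a definable $X \subseteq \Gamma$ of positive measure, an infinite sequence $(g_n)$ with the translates $X g_n$ pairwise almost disjoint would yield, by finite additivity and right invariance, $\sum_{i\le N} \mu(X) = \mu\bigl(\bigsqcup_{i\le N} X g_i\bigr) \le 1$, contradicting $\mu(X) > 0$ once $N$ is large; this rules out in particular such an indiscernible sequence.

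Next, the positivity assumption $\mu^{\rtimes k}(w_{G,k}^{-1}(1)) \ge r$ for some positive rational $r$ means that the predicate $Q_{k,r,w_k}$ holds in $G$, hence in $\Gamma$ by elementarity, so $w_{\Gamma,k}^{-1}(1)$ is $\mu^{\rtimes k}$-wide. Hence Corollary \ref{C:Main} applies to $\Gamma$ and produces $\emptyset$-definable subgroups $N_0 \le \Gamma$ and $H_\Gamma \trianglelefteq \Gamma$ with $N_0$ of finite index and FC-nilpotent of class $k-1$, and $H_\Gamma$ nilpotent of class less than $k$ with $\Gamma/H_\Gamma$ uniformly locally finite.

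The main (and only delicate) point is to descend from $\Gamma$ back to $G$ through the \emph{pure} group language $\mathcal L_{\mathrm{gr}}$. For this I would invoke the explicit constructions: $N_0 = \mathrm{FC}_{k-1}(\Gamma)$ from the proof of Proposition \ref{P:M1}, and $H_\Gamma = C_\Gamma(X/\{1\})$ for a suitable group-theoretic $X$ from the proof of Theorem \ref{T:M2} (as highlighted in the remark after that theorem). Both $N_0$ and $H_\Gamma$ are $\mathcal L_{\mathrm{gr}}$-definable without parameters, the indices $[\Gamma:N_0]$, $[\Gamma : H_\Gamma]\cdot \mathrm{exponent}(\Gamma/H_\Gamma)$ are bounded, nilpotency of class $<k$ and FC-nilpotency of class $k-1$ are first-order, and uniform local finiteness is expressed by a scheme of sentences. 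Taking the traces $N_0 \cap G$ and $H_\Gamma \cap G$ in $G$ via the same $\mathcal L_{\mathrm{gr}}$-formulas, \L o\'s/elementarity transfers each of these properties to $G$, yielding the desired finite index FC-nilpotent subgroup and the characteristic $H \trianglelefteq G$ with $G/H$ uniformly locally finite.
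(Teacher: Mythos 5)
Your proposal is correct and follows essentially the same route as the paper: pass to the $\mathcal L^\mu$-expansion, take an $\aleph_1$-saturated elementary extension $\Gamma$, apply Corollaries \ref{C:DefAmCompatible} and \ref{C:Main} there, and descend to $G$ via the pure-group-language definability of the resulting subgroups. The only (harmless) imprecision is your passing claim that FC-nilpotency of class $k-1$ is first-order; the paper instead transfers this by noting that the trace on $G$ is a finite-index subgroup of the FC-nilpotent subgroup of $\Gamma$, and likewise uses the embedding $G/\vartheta(G)\hookrightarrow \Gamma/\vartheta(\Gamma)$ to transfer uniform local finiteness.
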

\begin{proof} We regard $G$ as an $\LL^\mu$-structure and consider an $\aleph_1$-saturated elementary extension $\Gamma$ of $G$. By Corollary \ref{C:DefAmCompatible}, we have that $\Gamma$ admits a compatible system of ideals with the right-translational S$1$ property, which is induced by the family $\{\mu^{\rtimes n}\}_{n\in \mathbb N}$ of measures. Also, note that $\mu^{\rtimes  k}(w_{\Gamma,k}^{-1}(1))>0$ by definability of $\mu^{\rtimes k}$ and so $\Gamma$ satisfies the assumptions of Corollary \ref{C:Main}.  Thus, the group $\Gamma$ and so $G$ have a finite index subgroup which is FC-nilpotent of class at most $k-1$. Furthermore, there is a formula $\vartheta(x)$ without parameters (in the language of groups) such that $\vartheta(\Gamma)$ is a subgroup of $\Gamma$ which is nilpotent of class less than $k$ and such that $\Gamma/\vartheta(\Gamma)$ is uniformly locally finite. Therefore, we obtain that $\vartheta(G)=G\cap \vartheta(\Gamma)$ is an $\emptyset$-definable (in the language of groups) subgroup of $G$ satisfying the statement, since it is a subgroup of $\vartheta(\Gamma)$ and $G/\vartheta(G)$ embeds into $\Gamma/\vartheta(\Gamma)$.
\end{proof}

As in the finite case, a standard compactness argument yields that the function witnessing that the $d$-generator subgroups of $G/H$ have bounded size only depend on $n$, $d$ and the value of $\mu^{\rtimes n}(w_{G,n}^{-1}(1))$. We leave the details to the reader. Furthermore, as a consequence, by \cite[Corollary 2.1]{DM56} we obtain the following particular case of \cite[Theorem 1.6]{MTVV21}.
\begin{cor}
Let $G$ be an infinite finitely generated amenable group with a finitely additive right-invariant probability measure $\mu$ and suppose that
	\[
	\mu^{\rtimes k}(w_{G,k}^{-1}(1)) = \int_{x_1\in G} \left( \dots \left( \int_{x_k\in G} \mathds{1}_{w_{G,k}^{-1}(1)} (x_1,\ldots,x_k) \,\mathrm{d} \mu \right) \ldots \right) \,\mathrm{d} \mu >0.
	\]
	We have that $G$ is the extension of a nilpotent of class at most $k-1$ group by a finite group.
\end{cor}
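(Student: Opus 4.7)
The plan is to combine Theorem \ref{T:Amen} with the purely group-theoretic result of Duguid and McLain \cite{DM56} to produce a normal nilpotent subgroup of finite index in $G$.

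First, I would apply Theorem \ref{T:Amen} to $G$ (whose hypotheses are satisfied by assumption) to obtain a finite index $\mathcal L_{\mathrm{gr}}$-definable subgroup $K$ of $G$ which is FC-nilpotent of class at most $k-1$. Since $G$ is finitely generated and $[G:K]$ is finite, the subgroup $K$ is itself finitely generated, by the standard Reidemeister--Schreier argument.

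Next, I would invoke \cite[Corollary 2.1]{DM56}, which asserts that a finitely generated FC-nilpotent group of class $n$ is (nilpotent of class $n$)-by-finite. Applied to $K$, this produces a normal subgroup $N$ of $K$ which is nilpotent of class at most $k-1$ and such that $K/N$ is finite; in particular $N$ has finite index in $G$.

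Finally, to promote $N$ to a subgroup that is normal in all of $G$, I would pass to the normal core $N_0 = \bigcap_{g \in G} g N g^{-1}$. Because $N$ has finite index in $G$, the number of $G$-conjugates of $N$ is bounded by $[G:N]$, so $N_0$ is a finite intersection of finite-index subgroups and hence itself has finite index in $G$. As a subgroup of $N$, the group $N_0$ is nilpotent of class at most $k-1$, and by construction $N_0$ is normal in $G$ with $G/N_0$ finite, yielding the claimed extension. There is no genuine obstacle here: the entire content is packaged into Theorem \ref{T:Amen} and the Duguid--McLain result, and the remaining manipulation of finite-index subgroups is routine.
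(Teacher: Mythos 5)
Your proof is correct and takes essentially the same route as the paper, which likewise derives the corollary from Theorem \ref{T:Amen} together with \cite[Corollary 2.1]{DM56}; you have simply made explicit the routine steps (finite generation of the finite-index subgroup via Schreier, and passage to the normal core) that the paper leaves implicit. As a minor aside, the second conclusion of Theorem \ref{T:Amen} gives an even shorter path: the subgroup $H$ there is already normal and nilpotent of class less than $k$, and $G/H$ is finitely generated and locally finite, hence finite.
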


For $n=2$,  Corollary \ref{C:1} yields Neumann's statement for amenable groups, by mimicking the proof above (see \cite[Theorem 1.14]{Toi} for an effective quantitative version).

\begin{theorem}
	Let $G$ be an amenable group with a finitely additive right-invariant probability measure $\mu$ and suppose that $\mu^{\rtimes 2}(w_{G,2}^{-1}(1))>0$. We have that $G$ is finite-by-abelian-by-finite.
\end{theorem}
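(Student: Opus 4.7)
The plan is to mirror the proof of Theorem \ref{T:Amen} in the special case $n=2$, replacing Corollary \ref{C:Main} with Corollary \ref{C:1}, so as to retain the sharper ``finite-by-abelian-by-finite'' conclusion provided by Neumann's theorem (Fact \ref{F:FC}).

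First, I would view $G$ as an $\mathcal L^\mu$-structure and pass to an $\aleph_1$-saturated elementary extension $\Gamma$. By Corollary \ref{C:DefAmCompatible}, the family $\{\mu,\mu^{\rtimes 2}\}$ induces a compatible $2$-system of ideals on the boolean algebra of $\mathcal L^\mu$-definable subsets of $\Gamma$, with $\mu$ satisfying the right-translational $\mathrm{S}1$ property. The hypothesis $\mu^{\rtimes 2}(w_{G,2}^{-1}(1))>0$ is encoded by the $\mathcal L^\mu$-formula $Q_{2,r,\varphi}$, where $\varphi(x_1,x_2)$ expresses $[x_1,x_2]=1$, for some positive rational $r$; by elementarity it transfers to $\Gamma$, giving $\mu^{\rtimes 2}(w_{\Gamma,2}^{-1}(1))\ge r>0$. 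Therefore $\Gamma$ satisfies the hypotheses of Corollary \ref{C:1}, and I conclude that $\Gamma$ is finite-by-abelian-by-finite.

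It remains to transfer the conclusion back to $G$. Inspecting the proofs of Corollary \ref{C:1} and Lemma \ref{L:key}, the finite-by-abelian-by-finite structure of $\Gamma$ is witnessed by an explicit parameter-free $\mathcal L_{\mathrm{gr}}$-formula: there exists $k\ge 1$ such that the subgroup $H_\Gamma=\{x\in\Gamma:|\Gamma:C_\Gamma(x)|\le k\}$ coincides with $\mathrm{FC}(\Gamma)$ and has finite index in $\Gamma$, and by Fact \ref{F:FC} there exists $\ell$ with $|[H_\Gamma,H_\Gamma]|\le \ell$. Both of these statements are first-order in the pure language of groups, so pulling back along the elementary inclusion $G\preceq \Gamma$ yields that the trace $H=\{x\in G:|G:C_G(x)|\le k\}$ has finite index in $G$ and satisfies $|[H,H]|\le \ell$; hence $H$ is finite-by-abelian and $G$ is finite-by-abelian-by-finite. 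The only delicate point is to ensure that the outcome obtained in $\Gamma$ is captured by an $\mathcal L_{\mathrm{gr}}$-formula so that the transfer is legitimate, but this is handled automatically by the concrete shape of the subgroup supplied by Lemma \ref{L:key} together with Fact \ref{F:FC}.
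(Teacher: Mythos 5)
Your proposal is correct and follows exactly the route the paper intends: the paper's one-line proof says to apply Corollary \ref{C:1} while "mimicking the proof" of Theorem \ref{T:Amen}, which is precisely your passage to a saturated $\mathcal L^\mu$-elementary extension via Corollary \ref{C:DefAmCompatible}. Your transfer back to $G$ via the first-order description of $\mathrm{FC}(\Gamma)$ supplied by Lemma \ref{L:key} and Fact \ref{F:FC} is a valid (if slightly more elaborate than necessary) way to finish, since one could also just note that finite-by-abelian-by-finite is inherited by the subgroup $G\le\Gamma$.
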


\subsection{Forking generics}
Given a definable group $G$, we say that a formula $\phi(x)$ is {\em $f$-generic} over a set $A$  if for every $g$ in $G$ the formula $g\cdot \phi(x)$ does not fork over $A$, where $g\cdot \phi(x)$ denotes the formula $\exists z \left( \phi(z) \wedge gz=x \right)$; see Example \ref{E:Fork}. We say that a group $G$ has strong $f$-generics if there exists a small model $M_0$ and a global type $p(x)$ all whose formulas are $f$-generic over $M_0$.

Under the assumption that the ambient theory is simple, the collection of all formulas which are not $f$-generic over $\emptyset$ forms an ideal and every definable group in a simple theory has strong $f$-generics. This can be easily seen by combining Proposition 4.1.7 and Lemma 4.1.9 in \cite{Wag}. Furthermore, the general theory of forking in simple theories yields that this ideal has the translational S$1$ property (see \cite[Theorem 2.5.4]{Wag}) and the family of non-$f$-generics in the respective Cartesian products satisfies Fubini. To see this, note that a formula $\phi(x)\wedge \psi(y)$ is $f$-generic if and only if $\phi(x)$ and $\psi(y)$ are. Moreover, a type $\tp(a,b)$ is $f$-generic in $G\times H$ if and only if the types $\tp(a)$ and $\tp(b)$ are $f$-generic and $a$ is independent from $b$, see for instance \cite[Lemma 4.1.2]{Wag} as well as \cite[Lemma 4.3.12]{Wag}. 

Therefore, applying the results from the previous section we obtain the following:

\begin{proposition}
 Let $G$ be a definable group in a simple theory and assume that the set $w_{G,2}^{-1}(1)$ is $f$-generic. Then the group $G$ is finite-by-abelian-by-finite. 
\end{proposition}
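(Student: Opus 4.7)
The plan is to apply Corollary \ref{C:1} directly, so the task reduces to verifying its hypotheses for the forking/non-$f$-generic ideal on $G$ and on $G^2$. All the ingredients have already been assembled in the discussion preceding the statement, so the proof will be essentially a matter of invoking them in the right order.

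First, I would take $\mu_G$ to be the ideal on $\D_G(\M)$ consisting of those definable sets defined by formulas that are not $f$-generic over $\emptyset$, and analogously $\mu_{G^2}$ on $\D_{G^2}(\M)$. The paragraph above cites that, under simplicity, these collections are genuine ideals (via \cite[Proposition 4.1.7, Lemma 4.1.9]{Wag}), that $\mu_G$ has the right translational $\mathrm{S}1$ property (via \cite[Theorem 2.5.4]{Wag}), and that the family $\{\mu_{G^n}\}$ satisfies Fubini, using the characterization that $\phi(x)\wedge\psi(y)$ is $f$-generic iff both conjuncts are, together with the fact that $\tp(a,b)$ is $f$-generic in $G\times G$ iff $\tp(a)$ and $\tp(b)$ are $f$-generic and $a$ is independent from $b$.

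Next, by Lemma \ref{L:Relation}(1), the Fubini property implies that $\{\mu_{G^n}\}_{1\le n\le 2}$ is a compatible $2$-system of ideals on $G^2$. By hypothesis, the $\emptyset$-definable set $w_{G,2}^{-1}(1)=\{(x,y)\in G^2 : [x,y]=1\}$ is $f$-generic, hence $\mu_{G^2}$-wide. All hypotheses of Corollary \ref{C:1} are thereby met, and the conclusion that $G$ is finite-by-abelian-by-finite follows immediately.

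The only subtlety worth flagging is a size/saturation issue: Corollary \ref{C:1} is stated inside the ambient $\aleph_1$-saturated monster $\M$, and for the cited forking facts one typically works in a sufficiently saturated model of a simple theory. This is not a real obstacle: one passes to a saturated enough elementary extension to extract the abstract algebraic conclusion, which, being a first-order statement about $G$, then descends to the original group. I do not expect any genuine difficulty; the proof is really a bookkeeping exercise chaining together Lemma \ref{L:Relation}, Corollary \ref{C:1}, and the standard simple-theory facts recalled just before the proposition.
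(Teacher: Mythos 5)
Your proposal is correct and matches the paper's intended argument: the paper gives no separate proof for this proposition, deriving it directly from the preceding discussion (non-$f$-generic sets form a right translational $\mathrm{S}1$ ideal satisfying Fubini, hence a compatible $2$-system by Lemma \ref{L:Relation}) together with Corollary \ref{C:1}, exactly as you do. Your remark about passing to a sufficiently saturated model is also consistent with how the paper handles the analogous point in the theorem that follows.
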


\begin{theorem}
 Let $G$ be a definable group in a simple theory and assume that the set $w_{G,n}^{-1}(1)$ is $f$-generic. Then there exists a definable subgroup $H$ of $G$ which is nilpotent of class less than $2n-2$ and such that $G/H$ is finite.
\end{theorem}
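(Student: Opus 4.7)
The plan is to combine Proposition \ref{P:M1}, applied with respect to the forking ideal, with a classical structural theorem on FC-nilpotent type-definable groups in simple theories. As recalled in the paragraph preceding the theorem, in a simple theory the non-$f$-generic formulas form an ideal with the right translational $\mathrm{S}1$ property, and the corresponding family of ideals on the Cartesian powers $G^k$ satisfies Fubini; by Lemma \ref{L:Relation}, this yields a compatible system of ideals. Since $w_{G,n}^{-1}(1)$ is assumed to be $f$-generic, the hypotheses of Proposition \ref{P:M1} are met, and we deduce that $\mathrm{FC}_{n-1}(G)$ is $\emptyset$-definable, of finite index in $G$, and FC-nilpotent of class $n-1$.

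At this stage, applying Corollary \ref{C:Main} directly would only give a characteristic subgroup nilpotent of class less than $n$ with uniformly locally finite quotient, which is too weak for the sharper conclusion. The improvement in the simple-theory setting comes from the classical fact that every type-definable FC-nilpotent group of class $k$ in a simple theory contains a type-definable normal finite-index subgroup which is nilpotent of class at most $2k-1$. This is proved by induction on $k$, based on the observation that the FC-center of a type-definable group in a simple theory is finite-by-abelian with uniform bounds on the indices (which follows from the chain conditions of simple theories together with Fact \ref{F:FC} and Neumann's BFC theorem), so each FC-central factor splits into a finite central piece followed by a central abelian piece, giving rise to the doubling in nilpotency class; see the treatment of FC-nilpotent groups in Wagner's monograph on simple theories.

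Applying this doubling result to the definable group $\mathrm{FC}_{n-1}(G)$ produces a definable finite-index normal subgroup nilpotent of class at most $2(n-1)-1 = 2n-3$. Intersecting with its finitely many $G$-conjugates preserves both the nilpotency class and the finiteness of the index, so we obtain a definable normal subgroup $H$ of $G$ which is nilpotent of class less than $2n-2$ and such that $G/H$ is finite. The main obstacle is the doubling step itself: it requires the chain conditions of simple theories to guarantee that the inductive construction stays within \emph{definable} (rather than merely type-definable) subgroups and yields uniform control of index and nilpotency class. Once that ingredient is in place, the rest of the argument is a direct consequence of the general framework developed in Section \ref{s:Groups}.
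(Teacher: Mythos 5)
Your argument is essentially the paper's own proof: apply Proposition~\ref{P:M1} to get a definable finite-index subgroup that is FC-nilpotent of class $n-1$, then invoke the ``doubling'' result for FC-nilpotent groups in simple theories to land on a definable finite-index subgroup of nilpotency class less than $2(n-1)=2n-2$. The only difference is that the paper cites this doubling step directly as \cite[Lemma 3.4]{PaWa} (Palac\'in--Wagner's Fitting theorem paper, not Wagner's monograph), whereas you sketch its proof; your sketch is consistent with that lemma, so the argument goes through.
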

\begin{proof} 
 Without loss, we may assume that $G$ is $\aleph_1$-saturated. By Proposition \ref{P:M1} the group $G$ has a finite index subgroup which is FC-nilpotent of class $n-1$ and therefore by \cite[Lemma 3.4]{PaWa} we obtain that $G$ has a definable finite index subgroup of nilpotency class less than $2(n-1)$.  
\end{proof}


\subsection{Fsg groups} A definable group $G$ has {\em finitely satisfiable generics}  (fsg, in short) if there exists a small model $M_0$ and a global type $p(x)$ such that any translate $g\cdot p(x)$ of $p(x)$ is finitely satisfiable in $M_0$. In particular, a group with fsg has strong $f$-generics. Groups with fsg were first studied in \cite{HPP}. 

A definable set $X$ is said to be {\em left generic} in $G$ if finitely many left translates of $X$ cover $G$. Likewise with right generic. The set $X$ is {\em generic}
if it is both left and right generic. A partial type $\pi(x)$ implying $G(x)$ is generic if every formula in $\pi(x)$ is. In a group having fsg, witnessed by $p(x)$ and $M_0$, the set of non-generic sets forms an ideal and thus generic types exist. In fact, a subset $X$ is generic if and only if every translate of $X$ is satisfied in $M_0$. Hence, the type $p(x)$ is generic, as is any translate of $p(x)$ (cf.\,\cite[Proposition 4.2]{HPP}). In addition, there is a bounded number of (global) generic types \cite[Corollary 4.3]{HPP} and consequently, an easy compactness argument yields:

\begin{fact}
The collection of non-generic sets forms an ideal which has the right (left) translation S$1$ property.
\end{fact}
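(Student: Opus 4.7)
The plan is to treat the two claims of the fact separately. For the ideal structure, I would appeal to two standard properties of fsg groups that have been recorded above: the collection of global generic types is bounded in cardinality, and a definable subset $X$ of $G$ is generic if and only if some global generic type contains $X(x)$. With this latter characterization, the ideal axioms are immediate. $G$ itself is generic, so it is not in the collection. If $Y\subseteq X$ with $Y$ generic, the finitely many left translates covering $G$ via $Y$ also cover via $X$, so non-genericity is closed under subsets. If $X_1\cup X_2$ is generic, any global generic type $p$ containing $X_1(x)\vee X_2(x)$ must contain $X_1(x)$ or $X_2(x)$, so one of the two sets is generic. Right translation-invariance follows from $(\bigcup_i h_iX)g = G$ whenever $\bigcup_i h_iX = G$.

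For the S$1$ property I plan to argue by contradiction. Suppose $X$ is an $A$-definable generic set and $(g_n)_{n\in\mathbb N}$ is an $A$-indiscernible sequence such that $Xg_n\cap Xg_m$ is non-generic for all $n<m$, while each $Xg_n$ is generic (which is automatic by right translation-invariance once $X$ is). Let $\kappa$ be a cardinal bounding the number of global generic types. By compactness I would extend $(g_n)_{n\in\mathbb N}$ to an $A$-indiscernible sequence $(g_\alpha)_{\alpha<\kappa^+}$; since genericity of a formula is $\emptyset$-invariant and hence $A$-invariant, the extended sequence still satisfies that each $Xg_\alpha$ is generic while $Xg_\alpha\cap Xg_\beta$ is non-generic for $\alpha<\beta$. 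For each $\alpha<\kappa^+$, choose a global generic type $p_\alpha$ with $Xg_\alpha(x)\in p_\alpha$. Pigeonhole then yields indices $\alpha<\beta$ with $p_\alpha=p_\beta=p$, so that the intersection $Xg_\alpha\cap Xg_\beta$ lies in the generic type $p$ and is thus itself generic, a contradiction. The left translation S$1$ property is proved by the same argument applied on the left.

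The only obstacle I anticipate is essentially bookkeeping: I must confirm that the ideal of non-generic sets is genuinely $\emptyset$-invariant, so that indiscernibility of $(g_n)$ transfers to indiscernibility of the sequence of intersection formulas $Xg_n\cap Xg_m$; and I must invoke the characterization of generic sets in terms of membership in a generic type, rather than any ad hoc counting. Both points are standard consequences of the fsg formalism developed in \cite{HPP} and summarized in the preceding paragraphs, so I do not expect any genuine technical difficulty.
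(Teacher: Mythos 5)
Your argument is correct and is essentially the ``easy compactness argument'' the paper has in mind: the paper deduces the S$1$ property from the boundedness of the set of global generic types (\cite[Corollary 4.3]{HPP}) by exactly your stretch-the-indiscernible-sequence-and-pigeonhole reasoning, and takes the ideal structure from \cite[Proposition 4.2]{HPP} just as you do. The one cosmetic point is that deriving the union axiom from ``$X$ is generic iff some global generic type contains $X(x)$'' is logically circular --- the existence of such a type is equivalent to the ideal property --- but since that property is precisely the cited content of \cite[Proposition 4.2]{HPP}, this is harmless.
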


Furthermore, \cite[Proposition 4.5]{HPP} yields that a finite Cartesian product of groups with fsg has also fsg.  We give a shorter proof of this. Before proceeding, we point out that given two finitely satisfiable over $M$ global types $p$ of $G$ and $q$ of $H$, we can consider two global types in $G\times H$. Namely, consider
\[
p\rtimes q = \tp(a,b/\M), 
\]
where $(a,b)$ realizes $p\times q$ in some elementary extension of $\M$ and moreover $b$ realizes the unique finitely satisfiable extension of $q$ over $\M\cup\{a\}$. Likewise, one can define $p\ltimes q$ imposing that $a$ realizes the unique finitely satisfiable extension of $p$ over $\M\cup\{b\}$.

\begin{fact}
Let $G$ and $H$ be two definable groups with fsg witnessed by $p(x)$ and $q(y)$ respectively. Then $p\rtimes q$ and $p\ltimes q$ witness that $G\times H$ has fsg.
\end{fact}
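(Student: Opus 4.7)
The plan is to show, for any $(g,h)\in G\times H$ and any realization $(a,b)\models p\rtimes q$ in an elementary extension, that $\tp(ga,hb/\mathcal M)$ is finitely satisfiable in a common small model $M_0$ witnessing fsg for both $p$ and $q$ (which exists after enlarging, if necessary). The case of $p\ltimes q$ will be completely symmetric, with $a$ and $b$ playing swapped roles, so I focus on $p\rtimes q$. Fix a formula $\chi(x,y,\bar c)$ with $\bar c\in \mathcal M$ such that $\chi(ga,hb,\bar c)$ holds; the goal is to produce a pair $(m_1,m_2)\in M_0\times M_0$ satisfying $\chi(m_1,m_2,\bar c)$.

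The crucial preliminary claim is that $\tp(hb/\mathcal M\cup\{a\})$ is itself finitely satisfiable in $M_0$. By definition, $\tp(b/\mathcal M\cup\{a\})$ is the unique finitely-satisfiable-in-$\mathcal M$ extension of $q$; a direct check then shows that $\tp(hb/\mathcal M\cup\{a\})$ is an f.s.-in-$\mathcal M$ extension of $h\cdot q$, and uniqueness forces it to be the unique such extension. Since $h\cdot q$ is f.s. in $M_0$ by the fsg assumption on $q$, a standard Zorn-type argument produces a complete extension of $h\cdot q$ to $\mathcal M\cup\{a\}$ that remains f.s. in $M_0$ (and hence in $\mathcal M$); by uniqueness, this extension coincides with $\tp(hb/\mathcal M\cup\{a\})$, so the latter is f.s. in $M_0$, as claimed.

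With the claim in hand, the conclusion comes in two steps. The formula $\chi(ga,z,\bar c)$, viewed as a formula in the variable $z$ with parameters from $\mathcal M\cup\{a\}$, belongs to $\tp(hb/\mathcal M\cup\{a\})$, so the claim provides some $m_2\in M_0\cap H$ with $\chi(ga,m_2,\bar c)$. Then $\chi(x,m_2,\bar c)$ lies in $\tp(ga/\mathcal M)=g\cdot p$, which is f.s. in $M_0$ by the fsg assumption on $p$; hence there is $m_1\in M_0\cap G$ with $\chi(m_1,m_2,\bar c)$, exhibiting the desired realization of $\chi$ in $M_0\times M_0$. For $p\ltimes q$ the same strategy applies with the two coordinates swapped: first use finite satisfiability in $M_0$ of $\tp(ga/\mathcal M\cup\{b\})$ to obtain $m_1$, then apply fsg of $q$ in the second coordinate to obtain $m_2$.

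The main obstacle, in my view, is the preliminary claim: finite satisfiability in a small model is not automatically preserved when passing to a finitely-satisfiable-in-$\mathcal M$ extension, and one must exploit the uniqueness hypothesis built into the definition of $\rtimes$ (together with the compactness/Zorn argument) to identify two a priori different extensions. Once this step is clear, the rest reduces to peeling off the two coordinates one at a time, using the fsg properties of $p$ and $q$ in sequence.
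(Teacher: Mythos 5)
Your argument is correct and follows essentially the same route as the paper: the paper's appeal to ``transitivity'' of finite satisfiability over $M_0$ for the pair $\tp(g^{-1}a/M)$, $\tp(h^{-1}b/M,a)$ is exactly your two-step peeling of the coordinates. Your preliminary claim---that $\tp(hb/\mathcal M\cup\{a\})$ is finitely satisfiable in $M_0$ because it must coincide, by the uniqueness built into the definition of $\rtimes$, with the finitely-satisfiable-in-$M_0$ extension of $h\cdot q$ produced by the ultrafilter/Zorn argument---just makes explicit the step the paper compresses into ``by the assumption on $p$ and $q$''.
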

\begin{proof}
Let $M_0$ be some model with the property that every translate of $p(x)$ and of $q(y)$ is finitely satisfiable over $M_0$. We prove that $(g,h)\cdot p\rtimes q$ is finitely satisfiable over $M_0$, for a given $(g,h)\in G\times H$. Let $M\preceq \mathcal M$ be some small model containing $M_0, g$ and  $h$. Let $a$ be a realization of $p_{|M}$ and let $b$ be a realization of $q_{|M,a}$. By the assumption on $p$ and $q$, we get that $\tp(g^{-1}a/M)$ and $\tp(h^{-1}b/M,a)$ are finitely satisfiable over $M_0$ and hence $\tp(g^{-1}a,h^{-1}b/M) = (g,h)\cdot \left(p\rtimes q\right)_{|M}$ is finitely satisfiable over $M_0$, by transitivity. This yields the result. Likewise for $p \ltimes q$. 
\end{proof}

Therefore, for a group $G$ with fsg over $M_0$, we obtain that the collection of non-generic sets is an $M_0$-invariant ideal and that this family of ideals is a compatible system:
\begin{lemma}
Let $G$ be a definable group with fsg. If $Y\subset G^k$ is definable over $A$ generic, then for every $i$ there exist some $(a,b)\in G^i\times G^{n-i}$ such that $(a,b)\in Y$ and the types $\tp(a/A)$ and $\tp(b/A,a)$ are generic.
\end{lemma}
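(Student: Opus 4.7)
The plan is to run the $\rtimes$-construction recorded in the preceding fact. Assume without loss of generality that $A\supseteq M_0$, where $M_0$ is the small model witnessing fsg. Fix a global generic type $p\in S(\M)$ of $G^i$ and a global generic type $q\in S(\M)$ of $G^{n-i}$, and form $r=p\rtimes q$, a global generic type of $G^n$ witnessing that $G^n$ has fsg. Realize $r$ in an elementary extension $\M'\succ\M$ by a pair $(a_0,b_0)$. Then $\tp(a_0/\M)=p$ is generic on $G^i$; moreover $\tp(b_0/\M a_0)$ is by construction the unique finitely satisfiable extension of $q$ over $\M a_0$, and, being the fibre of the global generic $p\rtimes q$ at a generic realization of $p$, it is itself generic on $G^{n-i}$.

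Next I would translate into $Y$. Since $Y$ is $\M$-definable and generic in $G^n$, finitely many left translates $(g_1,h_1)Y,\ldots,(g_\ell,h_\ell)Y$ with $(g_j,h_j)\in G^n$ cover $G^n$, and by elementarity the same cover works in $\M'$. Hence $(a_0,b_0)\in (g_j,h_j)Y$ for some $j$. Setting $a:=g_j^{-1}a_0$ and $b:=h_j^{-1}b_0$, we obtain $(a,b)\in Y$, and since $g_j,h_j\in\M$ we have $\M a=\M a_0$, so $\tp(a/\M)=g_j^{-1}\cdot p$ and $\tp(b/\M a)=h_j^{-1}\cdot\tp(b_0/\M a_0)$ are left translates of global generic types; they are therefore themselves generic on $G^i$ and $G^{n-i}$ respectively.

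Finally I would pull $(a,b)$ back into the ambient model $\M$. Assuming $A$ countable, $\aleph_1$-saturation of $\M$ lets me realize $\tp(a/A)$ by some $a'\in G(\M)^i$, and then realize the partial type obtained from $\tp(b/Aa)$ by substituting $a'$ for $a$ by some $b'\in G(\M)^{n-i}$; both are consistent because the external $(a,b)\in\M'$ witnesses finite consistency of each. Then $\tp(a',b'/A)=\tp(a,b/A)$, so $(a',b')\in Y$, and $\tp(a'/A)$ and $\tp(b'/Aa')$ are restrictions of generic types and therefore generic, which is what we want.

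The main obstacle I anticipate is precisely the pitfall that the $\rtimes$-construction removes: an \emph{arbitrary} global generic type on $G^n$ containing $Y(x,y)$ need not have generic fibre $\tp(b/\M a)$ over its projection to $G^i$, so one cannot simply extend $Y$ to any generic type and read off the conclusion. Building the witnessing type as the iterated ``independent'' product $p\rtimes q$ of generics from the outset, and only afterwards using translation-invariance of the non-generic ideal to push a realization of $p\rtimes q$ into $Y$, keeps both coordinates generic simultaneously.
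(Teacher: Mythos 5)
Your proposal is essentially the paper's own proof: both arguments build the semidirect product of generic (fsg-witnessing) types, use that finitely many translates of $Y$ cover $G^n$ to locate a realization of that product type in a translate of $Y$, and then use translation-invariance of the non-generic ideal to conclude. Two points of comparison. First, the excursion to an elementary extension $\M'$ followed by a pullback via saturation is an unnecessary detour: the paper simply realizes the restriction $(p^{\rtimes k})_{|M}$ \emph{inside} $\M$, for a small model $M\supseteq M_0\cup A$, and chooses the translating tuple $(g,h)$ in $M$; this produces the witnesses directly and keeps all parameters small. Second, and more substantively, your justification that $\tp(b_0/\M a_0)$ is generic --- ``being the fibre of the global generic $p\rtimes q$ at a generic realization of $p$'' --- is not a valid principle on its own: the fibre of a generic set over a generic point of its projection need not be generic, and this is precisely the Fubini property, which the paper explicitly notes holds for the non-generic ideal only under additional hypotheses such as NIP. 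What actually carries the step is the coheir structure of the product: the paper writes $(p^{\rtimes k})_{|M}=(p^{\rtimes i}\ltimes p^{\rtimes k-i})_{|M}$ and invokes the Fact that $p\rtimes q$ and $p\ltimes q$ witness fsg of the product group, so that the pair $(g^{-1}a,h^{-1}b)$ realizes the restriction of a \emph{translate of an fsg-witnessing type}, whence both $\tp(g^{-1}a/M)$ and $\tp(h^{-1}b/M,a)$ are generic. Your construction of $p\rtimes q$ is the right one, but the genericity of the second-coordinate type should be deduced from this finite-satisfiability mechanism rather than asserted as a Fubini-type fact; with that repair the argument coincides with the paper's.
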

\begin{proof}
Let $p$ be a global type witnessing that $G$ has fsg over a model $M_0$. Suppose that $Y\subset G^k$ is $A$-definable and generic. Let $M$ be a model containing $M_0$ and $A$. Note that there is some finite subset $C$ of $G^k\cap M^k$ such that $G^n = C \cdot Y$. Let $(a,b)$ be a realization of $\left(p^{\rtimes i}\rtimes p^{\rtimes k-i }\right)_{|M}=(p^{\rtimes k})_{|M}$. Since $p^{\rtimes k}$ witnesses that $G^k$ has fsg, the type $(p^{\rtimes k})_{|M}$ is generic and thus, it contains some translate of the set $Y$ given by a tuple $(g,h)\in G^k\cap M^k$. Therefore $(g^{-1}a,h^{-1}b)\in Y$  and  the type $\tp(g^{-1} a/M)$, as well as the type $\tp(h^{-1}b/M,a)$ are generic, since the tuple $(g^{-1}a,h^{-1}b)$ realizes $(g,h) \cdot \left(p^{\rtimes k}\right)_{|M} = \left((g,h) \cdot  p^{\rtimes k}\right)_{|M}$. Hence, the types $\tp(g^{-1}a/A)$ and $\tp(h^{-1}b/A,a)$ are generic, as desired.
\end{proof}

Altogether, Corollary \ref{C:Main} can be applied to definable groups with fsg.  In fact, in this situation one can obtain a stronger result. The reason is that given a definable group $G$ with fsg there exists a smallest type-definable subgroup of $G$ of index at most $2^{\aleph_0}$, which is denoted by $G^{00}$ (cf.\,\cite[Corollary 4.3]{HPP}). Since $G^{00}$ exists, there also exists a smallest type-definable subgroup of bounded index which is the intersection of definable (normal) subgroups. It is denoted by $G^0$.

\begin{theorem}
Let $G$ be a definable group with fsg such that the set $w_{G,k}^{-1}(1)$ is generic in $G^k$. Then there exists a definable normal subgroup $N$ of $G$ of finite index which is nilpotent of class at most $k-1$. 
\end{theorem}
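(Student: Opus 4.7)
My plan is to apply Corollary \ref{C:Main} and Proposition \ref{P:M1} to the fsg group $G$, and then upgrade the resulting FC-nilpotency to genuine nilpotency by means of the existence of the smallest type-definable bounded-index subgroup $G^{00}$, together with fsg compactness and chain-condition arguments. Since $G$ with fsg admits a compatible system of ideals---namely the non-generic subsets of $G^n$ arising from the product types $p^{\rtimes n}$, with $\mu_G$ satisfying the right translational $\mathrm{S}1$ property, as in the preceding discussion---Proposition \ref{P:M1} provides an $\emptyset$-definable characteristic subgroup $F$ of $G$ of finite index which is FC-nilpotent of class $k-1$. Replacing $G$ by $F$ (which inherits fsg and the genericity of $w_{G,k}^{-1}(1)$), I may assume $G = \mathrm{FC}_{k-1}(G)$.

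I would then work inductively on $i = 0, 1, \ldots, k-1$ to construct $\emptyset$-definable characteristic subgroups $N_i$ of $G$ that are nilpotent of class at most $i$ and of finite index in $\mathrm{FC}_i(G)$, with $N_0 = \{1\}$. At each step one passes to the definable fsg quotient $G/N_{i-1}$, identifies $\mathrm{FC}(G/N_{i-1})$ with $\mathrm{FC}_i(G)/N_{i-1}$, and uses fsg compactness (analogous to the passage from $\mathrm{FC}$ to the definable set $\{g : |G : C_G(g)| \le k\}$ in the proof of Lemma \ref{L:key}) to present $\mathrm{FC}(G/N_{i-1})$ as a $\emptyset$-definable set of the form $\{g : |G/N_{i-1} : C(g)| \le n_i\}$ for some $n_i$. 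A chain-condition argument---using that each uniformly definable finite-index centralizer $C(g)$ contains the bounded-index type-definable subgroup $(G/N_{i-1})^{00}$, combined with the uniform bound $n_i$ and the saturation of the ambient model---then reduces the infinite intersection $Z(G/N_{i-1}) = \bigcap_g C(g)$ to a finite subintersection. Hence $Z(G/N_{i-1})$ is $\emptyset$-definable of finite index in $\mathrm{FC}(G/N_{i-1})$, and pulling back to $\mathrm{FC}_i(G)$ gives the desired $N_i$, which is nilpotent of class at most $i$ as a central extension of $N_{i-1}$ by an abelian group. After $k-1$ iterations, $N := N_{k-1}$ is an $\emptyset$-definable normal finite-index subgroup of $G = \mathrm{FC}_{k-1}(G)$ that is nilpotent of class at most $k-1$, as required.

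The principal obstacle is the chain-condition step, reducing the intersection $\bigcap_g C(g)$ to a finite subintersection. While this is a classical Baldwin--Saxl consequence in NIP settings, in general fsg theories it must be deduced directly, combining the existence of $(G/N_{i-1})^{00}$, the uniform centralizer index bound from fsg compactness, and saturation of the ambient model, to preclude an infinite strictly decreasing chain of uniformly definable subgroups of uniformly bounded finite index.
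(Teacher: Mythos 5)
Your strategy---first extracting FC-nilpotency from Proposition \ref{P:M1} and then upgrading it to genuine nilpotency via chain conditions---is genuinely different from the paper's proof, and it has a gap at its central step. The paper does not pass through the FC-filtration at all: it runs the induction on the iterated centralizers $C_G^{i+1}(G^0)=C_G\left(G^0/C_G^i(G^0)\right)$ of the subgroup $G^0$, using the generic tuple $(a_1,\ldots,a_k)$ with $[a_1,\ldots,a_k]=1$ to show at each stage that $C_G\left([a_1,\ldots,a_{k-(i+1)}]/C_G^i(G^0)\right)$ is ind-definable over its parameters and contains a generic type, hence is definable of finite index by Lemma \ref{L:WideInd} and therefore contains $G^0$. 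This yields $G^0\le C_G^{k-1}(G^0)$, so $G^0$ is nilpotent of class at most $k-1$, and the definable finite-index subgroup $N$ is then extracted by compactness because $G^0$ is a directed intersection of definable finite-index normal subgroups.

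The concrete gap in your plan is this: for intermediate $i$ the group $\mathrm{FC}_i(G)$, equivalently $\mathrm{FC}(G/N_{i-1})$, need not have finite index in $G$, so it need not contain any generic set, and the only mechanism available for turning an ind-definable FC-center into a definable set with a uniform bound on centralizer indices (Lemma \ref{L:WideInd} followed by compactness, as in Lemma \ref{L:key}) does not apply. Hence your presentation of $\mathrm{FC}(G/N_{i-1})$ as $\{g : |G/N_{i-1}:C(g)|\le n_i\}$ is unjustified, as is the identification of $\mathrm{FC}(G/N_{i-1})$ with $\mathrm{FC}_i(G)/N_{i-1}$ (that $C_G(x/\mathrm{FC}_{i-1}(G))$ has finite index does not obviously imply the same for $C_G(x/N_{i-1})$ when $N_{i-1}$ is merely of finite index in $\mathrm{FC}_{i-1}(G)$). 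The chain-condition step you flag as the principal obstacle is also a real obstruction rather than a technicality: a uniformly definable family of subgroups of index at most $n_i$ has intersection containing $G^0$, hence of bounded index, but without NIP (Baldwin--Saxl) that intersection can be a type-definable subgroup that is neither definable nor of finite index, so it need not reduce to a finite subintersection. The difficulty of upgrading FC-nilpotency to definable nilpotency is visible elsewhere in the paper: in the simple-theory case it costs a factor of two in the nilpotency class (via \cite[Lemma 3.4]{PaWa}), whereas your route would deliver class at most $k-1$ for free. The remedy is to abandon the FC-filtration and argue directly with the iterated centralizers of $G^0$, as the paper does.
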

\begin{proof}
Observe first that since $G^0$ is type-definable, the iterated centralizer subgroups of $G^0$, which are defined inductively on $i$ as
\[
C_G^{i+1}\left(G^0\right) = C_G\left(G^0/C_G^i(G^0)\right),
\]
are $\emptyset$-ind-definable by Remark \ref{R:Ind-def}. By the previous lemma, there are $a_1,\ldots, a_k$ in $G$ such that $[a_1,\ldots,a_k]=1$ and that $\tp(a_1)$ and $\tp(a_{i+1}/a_1,\ldots,a_{i})$ for $i=1,\ldots,k-1$ are generic. We prove by induction on $i\le k-1$ that 
\[
[a_1,\ldots,a_{k-i}]\in C_G^i(G^0). 
\] 
For $i=0$, there is nothing to prove since $C_{G}^0(G^0)=1$. 
Suppose that the statement holds for $i<k-1$. Thus 
\[
a_{k-i} \in  C_G \left([a_1,\ldots,a_{k-(i+1)}]/C_{G}^i(G^0) \right)
\]
and so the subgroup $C_G \left( [a_1,\ldots,a_{k-(i+1)}]/C_{G}^i(G^0) \right)$ has finite index in $G$ and it is definable by Lemma \ref{L:WideInd}, since it is ind-definable over $a_1,\ldots,a_{k-(i+1)}$ by Remark \ref{R:Ind-def}. In particular, it contains $G^0$ and so 
\[
[a_1,\ldots,a_{k-(i+1)}]\in C_G \left( G^0/C_G^i(G^0) \right) = C_G^{i+1}(G^0),
\]
as desired. Hence, we finally obtain that $a_1$ belongs to $C_G^{k-1}(G^0)$ and so $C_G^{k-1}(G^0)$ is definable and has finite index, by Lemma \ref{L:WideInd}. Thus, we have that $G^0\le C_G^{k-1}(G^0)$ and so $G^0$ is nilpotent of class at most $k-1$. Therefore, since $G^0$ is the intersection of all definable finite index normal subgroups of $G$, a standard compactness argument yields the existence of a definable normal subgroup $N$ of $G$ which is nilpotent of class at most $k-1$ and has finite index in $G$.
\end{proof}

This result applies to definably compact groups definable in a saturated o-minimal expansion of a real closed field (cf.\,\cite[Theorem 8.1]{HPP}). Consequently, for compact semialgebraic Lie groups we obtain the following (see \cite[Example 8.34]{Sim}).

\begin{cor}\label{C:Lie}
Let $k\ge 1$ and let $G$ be a compact semialgebraic Lie group and suppose that $w_{G,k}^{-1}(1)$ has positive Haar measure. Then $G$ has a clopen finite index subgroup which is nilpotent of class at most $k-1$.
\end{cor}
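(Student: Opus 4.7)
The plan is to interpret $G$ inside a saturated o-minimal expansion of a real closed field, transport the Haar-measure hypothesis into model-theoretic genericity, apply the preceding theorem, and then convert the definable conclusion back into Lie-theoretic language, finally sharpening the nilpotency bound via the connected-component structure of compact Lie groups.

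First, since $G$ is a compact semialgebraic Lie group, it is a definable set in a sufficiently saturated o-minimal expansion $\M$ of the real closed field, where the group operation is semialgebraic. In $\M$ the group $G$ is definably compact, and by \cite[Theorem 8.1]{HPP} it therefore has fsg. On a definably compact group in an o-minimal expansion of a real closed field, the unique left-invariant Keisler measure on the Boolean algebra of definable subsets coincides with the normalized Haar measure, and a definable subset is generic precisely when it has positive Haar measure (cf.\ \cite[Example 8.34]{Sim}). In particular the hypothesis that $w_{G,k}^{-1}(1)\subseteq G^k$ has positive Haar measure translates to the statement that $w_{G,k}^{-1}(1)$ is generic in $G^k$.

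Applying the preceding theorem yields a definable normal subgroup $N\trianglelefteq G$ of finite index which is nilpotent of class at most $k-1$. Since $N$ is semialgebraic it is closed, hence a Lie subgroup by Cartan's theorem, and being of finite index in the compact Lie group $G$ it is also open, so $N$ is clopen. To reach the stated bound of class strictly less than $k-1$, I would pass to the identity component $N^{\circ}$: it is clopen in $N$ (hence of finite index in $G$), characteristic in $N$ and thus normal in $G$. Crucially, $N^{\circ}$ is a connected compact nilpotent Lie group, and by the classical structure theorem any connected compact solvable Lie group is a torus; hence $N^{\circ}$ is abelian, i.e.\ of nilpotency class at most $1$. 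For $k\ge 3$ this is strictly less than $k-1$, and one takes the finite-index clopen abelian subgroup $N^{\circ}$ as the desired witness.

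The main subtlety is the identification of Haar measure on $G^k$ with the invariant Keisler measure on definable subsets, which relies on the uniqueness of the invariant measure in the o-minimal fsg setting; once this is in place, the upgrade from nilpotent-of-class-$\le k-1$ to abelian is essentially free because every closed subgroup of a compact Lie group meets the connected component in an open subgroup, collapsing the whole nilpotency hierarchy after passing to $N^{\circ}$.
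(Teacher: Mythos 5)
Your first two paragraphs are exactly the derivation the paper intends (it gives no explicit proof of Corollary \ref{C:Lie}, only the pointer to \cite[Example 8.34]{Sim}): definable compactness gives fsg by \cite[Theorem 8.1]{HPP}, the unique invariant Keisler measure on $G^k$ restricts to normalized Haar measure on semialgebraic sets so that positive Haar measure yields genericity, the preceding fsg theorem produces a definable normal finite-index subgroup $N$ nilpotent of class at most $k-1$, and $N$ is closed (definable subgroups in o-minimal structures are closed), hence clopen. Where you diverge is the final sharpening via $N^{\circ}$, and that step is mathematically sound and in fact proves something stronger than the paper records: a connected compact nilpotent Lie group is a torus, so $G$ is abelian-by-(clopen finite index). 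However, you should recognize that the printed bound ``less than $k-1$'' is almost certainly a typo for ``less than $k$'': every parallel statement in the paper (Theorem \ref{T:M2}, Corollary \ref{C:Main}, the fsg theorem you invoke) gives class at most $k-1$, and the literal bound is false for $k=2$ --- a torus $G$ has $w_{G,2}^{-1}(1)=G^2$ of full measure but no finite-index subgroup of class less than $1$, i.e.\ no finite-index trivial subgroup --- and vacuous for $k=1$. Your argument, as you yourself note, only reaches the literal bound when $k\ge 3$, so it does not prove the statement for all $k\ge 1$; no argument can. The honest conclusion is either the corrected bound (class less than $k$, which the first two paragraphs already give with no need for the identity-component step), or the genuinely stronger ``clopen finite-index abelian subgroup'' that your torus observation delivers for every $k\ge 2$. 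One minor point you elide: the theorem is applied in the saturated model, so $N$ may a priori be defined with nonstandard parameters; the descent to the real points is harmless because a connected compact Lie group has no proper finite-index subgroups, so $N\cap G(\mathbb R)$ automatically contains the identity component of $G(\mathbb R)$ and is a union of its finitely many connected components, hence clopen.
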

	
We remark that under certain assumptions on the ambient theory the ideal of non-generic sets satisfies the Fubini property. This holds for definable groups with fsg in an NIP theory, see \cite[Proposition 8.32]{Sim} and \cite[Theorem 3.29]{Star}.

\bibliographystyle{plain}

\end{document}